\renewcommand{\mkbegdispquote}[2]{\itshape}
\newcommand{\ubar}[1]{\underaccent{\bar}{#1}}
\newcommand{\ang}[1]{\langle  #1 \rangle }
\newcommand{\cF}{{\mathcal F}}
\newcommand{\R}{\mathbb{R}}
\newcommand{\p}{\mathbb{P}}
\newcommand{\q}{\mathbb{Q}}
\newcommand{\E}{\mathbb{E}}
\newcommand{\F}{\mathbb{F}}
\newcommand{\bH}{\mathbb{H}}
\newcommand{\id}{{\mathbf 1}}
\newcommand{\as}{\mbox{{\rm a.s.}}}
\newcommand{\aee}{\mbox{{\rm a.e.}}}
\newtheorem{theorem}{Theorem}
\newtheorem{standassum}[theorem]{Standing Assumption}
\newtheorem{corollary}[theorem]{Corollary}
\newtheorem{definition}[theorem]{Definition}
\newtheorem{lemma}[theorem]{Lemma}
\newtheorem{proposition}[theorem]{Proposition}
\theoremstyle{definition}
\numberwithin{equation}{section}
\numberwithin{theorem}{section}
\begin{document}

\title{Robust Time-inconsistent Linear-Quadratic Stochastic Controls: A Stochastic Differential Game Approach}

\author{Bingyan Han\thanks{Thrust of Financial Technology, The Hong Kong University of Science and Technology (Guangzhou), China. Email: bingyanhan@hkust-gz.edu.cn}
	\and Chi Seng Pun\thanks{School of Physical and Mathematical Sciences, Nanyang Technological University, Singapore.  Email: cspun@ntu.edu.sg}
	\and Hoi Ying Wong\thanks{Department of Statistics, The Chinese University of Hong Kong, Hong Kong, China.  Email: hywong@cuhk.edu.hk}
}

\date{April 27, 2025}

\maketitle

\begin{abstract}
	This paper studies robust time-inconsistent (TIC) linear-quadratic stochastic control problems, formulated by stochastic differential games. By a spike variation approach, we derive sufficient conditions for achieving the Nash equilibrium, which corresponds to a time-consistent (TC) robust policy, under mild technical assumptions. To illustrate our framework, we consider two scenarios of robust mean-variance analysis, namely with state- and control-dependent ambiguity aversion. We find numerically that with time inconsistency haunting the dynamic optimal controls, the ambiguity aversion enhances the effective risk aversion faster than the linear, implying that the ambiguity in the TIC cases is more impactful than that under the TC counterparts, e.g., expected utility maximization problems.
	\\[2ex] 
	\noindent{\textbf {Keywords:} Robust stochastic controls, equilibrium controls, model uncertainty, time inconsistency, stochastic differential game, robust mean-variance analysis}
	\\[2ex]
	\noindent{\textbf {Mathematics Subject Classification:} 91A15, 49N90, 91A80, 91G10} \\
\end{abstract}

\section{Introduction}

Stochastic control theory embraces dynamic state evolution and decision-making problems arising in many disciplines. To capture the probabilistic uncertainty of states, the agent adopts stochastic processes under some specified probability measure for the modelling of states. Linear stochastic processes are perhaps the most parsimonious approximation of the reality, while they are widely adopted for their tractability; see \cite{yz99,fleming2006controlled,lz02,hjz12,hh17} for more details.

However, relying on a model to fully characterize the dynamic state evolution is unrealistic as the model is eventually estimated (or learned) and there will be unavoidable estimation and misspecification errors. This kind of uncertainty about model accuracy is known as model uncertainty or ambiguity. The seminal work of \cite{k21} clarified the subtle but essential difference between risk and ambiguity. Loosely speaking, risk and ambiguity are different layers of concepts; the former refers to the uncertainty about the future outcomes given their probabilistic properties (or distribution), while the latter refers to the uncertainty about the distribution. Typically, a rational decision-maker (DM) is averse to both risk and ambiguity.

To account for ambiguity in decision-making, an extensive list of research focuses on robust controls, which optimize against a set of alternative probabilistic models. Examples include problems with time-consistent (TC) objectives such as expected utility maximization \cite{m04,bp17} and with time-inconsistent (TIC) ones including mean-variance (MV) portfolio selection \cite{ih18}, where TC (resp. TIC)\footnote{In this paper, the abbreviation TC (resp. TIC) refers to time-consistent or time consistency (resp. time-inconsistent or time inconsistency) wherever it should be in place.} refers to the possession (resp. violation) of Bellman's principle of optimality for the dynamic control problem. The latter should be handled carefully since the concept of optimality is blurred. We refer the readers to the review of \cite{Cohen2020} that went through the developing literature on time preferences. For the aspect of solving the respective DM problems, \cite{Ekeland2006} appeared to be one of the first few works that establish a mathematical framework of subgame perfect equilibrium (SPE) on top of the ideas of \cite{Strotz1955,Phelps1968,Peleg1973}. The framework was then popularized since the seminal work of \cite{bc10} on its application to MV portfolio selection, while \cite{bm14,bkm17,hjz12,hjz17,Yong2012,Wei2017} subsequently developed the analogous dynamic programming and maximum principle frameworks for characterizing the SPE; see also the review of \cite{He2022} and \cite{bayraktar2021equilibrium,huang2021strong}. Motivated by the significance of model uncertainty, a developing literature introduces the robust SPE that accounts for ambiguity and time inconsistency simultaneously; see \cite{p18,Lei2020,hpw21FS,han2022robust}. Conceptually, both the DM and the fictitious adversary, as well as their future incarnations, form ``games in subgames" and the robust optimal policy for the DM (or ``DMs" across time) corresponds to her Nash equilibrium point.

Though, several questions are left open by the works of \cite{hpw21FS,han2022robust,Lei2021}. Technically, the preference ordering, advocated in aforementioned works that follow \cite{ahs03}, requires strong assumptions to validate the respective expansion results. The (locally) worst-case scenario must be determined before solving the equilibrium strategy. A framework without preference ordering is likely more tractable, while it still can provide insights into robust decision-making. The same motivation can be found in the studies of time-consistent objectives, such as \cite{ves03,os14}. Section \ref{sec:prefer} provides a detailed comparison between different formulations. Moreover, an interesting question is about the economic relation between ambiguity and risk aversion in the TIC cases. We could get some tastes from \cite{m04} that answered the TC counterpart question for expected utility maximization: the existence of ambiguity aversion increases the effective risk aversion linearly and partially explains the equity premium puzzle. In this work, we, however, discover different impacts of ambiguity aversion on the dynamic controls under TIC.

In this paper, we study a general stochastic linear-quadratic (LQ) control formulation. To account for the model misspecification, the DM introduces a set of alternative models with measures that are equivalent to the reference measure. Loosely speaking, the DM presumes that there is an adversary (nature or financial market) against him. Different from the previous results in \cite{hpw21FS,han2022robust}, we introduce a stochastic differential game (SDG) framework \textit{without preference ordering}. The DM and the adversary are in symmetric positions without any edge. In contrast, \cite{p18,hpw21FS,han2022robust} allowed the adversary to optimize based on the DM's actions. Thus, the DM in this paper is less pessimistic than the counterparts in the previous frameworks. By comparing our SDG framework with that of \cite{hpw21FS,han2022robust}, we conclude that ours is more flexible and tractable in the sense that it greatly relaxes the technical assumptions and facilitates new semi-analytic examples. Relevant literature on SDG is vast and we refer the readers to \cite{ves03,os14,moon2020linear} and references therein.

Mathematically speaking, the model uncertainty normally destroys the LQ structure of the linear state process and the quadratic objective. Especially, the objective implicitly depends on alternative measures. To overcome these difficulties, we incorporate the Radon-Nikod\'ym derivative as part of the state process. While the nonlinearity persists, we manage to apply the classical spike variation approach and obtain expansion results under mild assumptions that are comparable to the non-robust cases in \cite{yz99,hjz12}. Moreover, the sufficient conditions for an equilibrium of two players, the DM and the imaginary adversary, are in a parallel position as there is no preference ordering.  

To illustrate our LQ framework, we present two new examples with semi-analytic solutions nesting the MV portfolio selection as a special case. In the same spirit of considering state-dependent risk aversion as in \cite{bmz14}, the first example introduces a state-dependent ambiguity aversion. The second example, however, presents a new setting of control-dependent ambiguity aversion. Both settings are sensible in the dimensional analysis on objective. However, they demonstrate different behaviors in the numerical analysis. An essential technical difficulty is that the derived ODE systems are more complicated than the Riccati-type ODEs studied in the non-robust setting \cite{yz99,hjz12}. We present the existence and uniqueness results with a truncation method under some technical assumptions. 

One main economic finding of this paper is the distinct impacts of model uncertainty on the equilibrium, which validates that the DM with equilibrium controls should seriously address the model uncertainty. In contrast, the DM with expected utility maximization can simply increases the risk aversion coefficient (linearly) to reflect the ambiguity aversion, as shown in \cite{m04}. It provides a new insight into the equity premium puzzle that argues for the requirement of unrealistically high level of risk aversion. Remarkably, our numerical analysis demonstrates a rate faster than the linear with state-dependent ambiguity aversion in increasing the effective risk aversion. The impact of ambiguity aversion is remarkable so that an ambiguity-averse investor even with a low risk aversion could invest a similar proportion into stocks as with an ambiguity-neutral investor with a high risk aversion. Practitioners usually invest less aggressively in stocks than the classical portfolio theory suggests. While this phenomenon was explained in \cite{m04} with either an excessive ambiguity aversion or an excessive risk aversion, our work, however, finds that rational investors can invest conservatively even under both fair ambiguity and risk aversions. Nevertheless, DMs may find the consideration of state-dependent ambiguity aversion too conservative since it leads to a zero-investment strategy in the no-risk-aversion setting. In this case, our new control-dependent ambiguity aversion may agree with their tastes of ambiguity, as this setting yields a closer behavior to the classical MV portfolios.

The remainder of this paper is organized as follows. Section \ref{sec:form} formulates the problem in a SDG framework. Section \ref{sec:suff} proves sufficient conditions for an equilibrium. Section \ref{sec:examples} applies the derived framework to examples with state-dependent and control-dependent ambiguity aversions, respectively. Section \ref{sec:num} numerically shows the complex interplay between ambiguity and risk aversion. Our code is available at \url{https://github.com/hanbingyan/DiffGame}. Section \ref{sec:con} concludes. All proofs are deferred to the Appendix \ref{sec:app}.

\section{Problem Formulation}\label{sec:form}
Let $T>0$ be a fixed finite terminal time. Consider a probability space $(\Omega, \cF, \F, \p)$, where $\F := (\cF_t)_{t\in[0,T]}$ is the completed canonical filtration of  a $d$-dimensional standard $\p$-Brownian motion $W^\p_t=(W^\p_{1t},\cdots,W^\p_{dt})'$, $t\in[0,T]$. We summarize the frequently used notations below.  Consider a set of probability measures that are equivalent to $\p$, denoted by $\mathcal{Q} := \{\q | \q \sim \p\}$. In fact, we work on a set smaller than $\mathcal{Q}$ later. Depending on the context, $\bH$ represents the Euclidean spaces or the spaces of continuous functions. Let $k,p$ be generic positive integers, we denote by
\begin{itemize}[label=\raisebox{0.25ex}{\tiny$\bullet$},leftmargin=1.5em]
	\item $\odot$ element-wise multiplication, 
	\item $\ang{\cdot, \cdot}$ the inner product, 
	\item $|A| =\sqrt{\sum_{i,j}a_{ij}^2}$ the Frobenius norm of a matrix $A$,
	\item $\mathbb S^k$ the set of all  $k \times k$ real symmetric matrices,
	\item $\E^\p[\cdot]$ and $\E^\p_t[\cdot]$ the expectation and conditional expectation under $\p$,
	\item $L^\infty(t, T; \bH) := \left\{ f: [t, T] \rightarrow \bH \,\big| \text{ $f$ is measurable and essentially bounded} \right\}$, 
	\item $L^p(t, T; \bH) := \left\{ f: [t, T] \rightarrow \bH \,\big| \int^T_t |f_s|^p ds < \infty \right\}$,
	
	\item $L^\infty_{\cF_t}(\Omega; \bH) := \left\{ X: \Omega \rightarrow \bH \,\big| \text{ $X$ is $\cF_t$-measurable and essentially bounded} \right\}$, 
	\item $L^p_{\cF_t}(\Omega; \bH) := \left\{ X: \Omega \rightarrow \bH \,\big| \text{ $X$ is $\cF_t$-measurable and } \sup_{\q \in \mathcal{Q}} \E^\q[|X|^p] < \infty \right\}$,
	
	\item $L^\infty_\F(t, \, T; \, \bH) := \big\{ X: [t, T] \times \Omega \rightarrow \bH \,\big| (X_s)_{s\in[t,T]} \text{ is } \F\text{-progressively measurable and} \\ \text{ essentially  bounded}\big\}$,
	\item $L^p_\F(t, \, T; \, \bH) := \big\{ X: [t, T] \times \Omega \rightarrow \bH \,\big| (X_s)_{s\in[t,T]} \text{ is } \F\text{-progressively measurable and } \\ \sup_{\q \in \mathcal{Q}} \E^\q[\int_t^T |X_s|^p ds ]<\infty  \big\}$,
	
	\item $L^p_\F(\Omega; \, C(t, \, T; \, \bH)) := \big\{ X: [t, T] \times \Omega \rightarrow \bH \,\big| (X_s)_{s\in[t,T]} \text{ is continuous, }$ $\F$-adapted, and $ \sup_{\q \in \mathcal{Q}} \E^\q[\sup_{s\in [t,T]} |X_s |^p]<\infty \big\}$.
\end{itemize}

\subsection{The Models and the Controls}
Under the reference measure $\p$, consider an $n$-dimensional linear controlled state process $X$ satisfying the stochastic differential equation (SDE):
\begin{equation}\label{eq:def-state}
	dX_s  =  (A_sX_s+B_s'u_s+b_s)ds +\sum_{j=1}^d(C_s^jX_s+D_s^{j} u_s+\sigma_s^{j} )dW^\p_{js}, \quad X_0  =  x_0,
\end{equation}
where $x_0\in \R^n$ is the initial value  and $(u_t)_{t\in[0,T]}$ is the $l$-dimensional control process. We assume parameters are deterministic functions of suitable sizes, satisfying conditions detailed in Assumption \ref{assum:state} below. For problems with random coefficients, one may augment the state process with dynamics of the respective stochastic processes. Then our framework is still applicable if the augmented processes have a linear structure.

Besides the risk uncertainty captured by the randomness of the state process, DMs also confront uncertainty from potential model misspecification and ambiguity about the correctness of \eqref{eq:def-state}. \cite{k21} and the paradox by \cite{e61} emphasize the importance of ambiguity aversion in decision-making. A widely used robust control rule is initialized and formalized by \cite{w45} and \cite{ahs03}. Instead of considering a single reference measure $\p$, an ambiguity-averse DM considers a set of alternative models and optimizes against the worst-case scenario among the models. Specifically, for each $\q\in \mathcal{Q}$, there exists an $\F$-progressively measurable stochastic process $h_s = (h_{1s},...,h_{ds})'$ such that for $t\in[0,T]$, the Radon-Nikod\'ym derivative of $\q$ with respect to $\p$ is given by
\begin{equation}\label{eq:radon}
	\zeta_t := \frac{d\q}{d\p} \Big\vert_{\cF_t} = \exp\left(\int^t_0 h'_s dW^\p_s -\frac{1}{2} \int^t_0 h'_s h_s ds\right).
\end{equation}
For a later use, we introduce $\zeta(s;t) := \zeta_s/\zeta_t$. Technically, we restrict that $h \in L^\infty_\F(0,T;\R^d)$ and thus the Novikov's condition is satisfied. Therefore, the Radon-Nikod\'ym derivative is a positive $\p$-martingale and $W^\q_t$, driven by $dW^\q_t = dW^\p_t - h_t dt$, is a $d$-dimensional $\q$-Brownian motion defined on $(\Omega,\cF, \F,\q)$. For any fixed $\q\in\mathcal{Q}$, the state process starting at time $t\in [0,T)$ with the current state $x_t \in L^4_{\cF_t}(\Omega; \, \R^n)$ is 
\begin{align} 
	& dX_s = \mu(X_s, u_s, h_s, s) ds + \sum_{j=1}^d (C_s^jX_s+D_s^{j} u_s+\sigma_s^{j} )dW^\q_{js}, \quad X_t = x_t, \label{eq:state_Q} \\
	\text{with} & \quad \mu(X_s, u_s, h_s, s) :=  (A_sX_s+B_s'u_s+b_s) +\sum^d_{j=1}(C_s^jX_s+D_s^{j} u_s+\sigma_s^{j} )h_{js}. \label{eq:mu}
\end{align}
The $(h_s)_{s\in[t,T]}$ in \eqref{eq:state_Q} reflects the distortions on the reference model and captures a set of alternative ones. Note that under each $\q$, we lose the linear structure of the state process \eqref{eq:state_Q} when we regard $h$ as a control by the imaginary adversary, noting the $u_sh_{js}$.

\begin{standassum}\label{assum:state}
	Suppose the parameters in \eqref{eq:def-state} satisfy $A, C^j \in L^\infty(0, T; \R^{n\times n})$, $B \in L^\infty(0, T; \R^{ l\times n})$,  $D^j \in L^\infty(0, T; $ $ \R^{n\times l})$, and $b, \sigma^j $ $\in L^4(0, T; \R^n)$, $j = 1, ..., d$.
\end{standassum}

\begin{definition}\label{def:admi}
	A pair of $(h, u)$ is called admissible if 
	\begin{itemize}
		\item[(1)] the control process $u \in L^4_\F(0, \, T; \, \R^l)$;
		\item[(2)] $h \in L^\infty_\F(0,T;\R^d)$.
	\end{itemize}
\end{definition}

Under Assumption \ref{assum:state}, there exists a unique solution $X \in L^4_\F(\Omega; \, C(0, \, T; \, \R^n))$ to \eqref{eq:def-state} for admissible $(h, u)$; see \cite[Appendix D]{fleming2006controlled}. Compared with the standard setting in \cite[Chapter 6]{yz99}, we impose stronger moment conditions in Standing Assumption \ref{assum:state} and Definition \ref{def:admi} to validate the well-posedness of adjoint processes. 

The DM considers the following objective function which relies on $\q$:
\begin{align}
	J(t, x_t; u, h) := &  ~\mathbb{E}^\q_t \left[ \int_t^T f(X_s, u_s, h_s, s) ds \right] +\frac{1}{2}\mathbb{E}^\q_t [\ang{G X_T, X_T}]\nonumber \\
	& -\frac{1}{2} \ang{\nu \mathbb{E}^\q_t [X_T], \mathbb{E}^\q_t [X_T]}-\ang{ \mu_1 x_t+\mu_2, \mathbb{E}^\q_t [X_T]}, \label{eq:obj}
\end{align}
where $X=X^{t,x_t,u,h}$ is solved from \eqref{eq:state_Q}, $\mathbb{E}^\q_t [\cdot]=\mathbb{E}^\q [\cdot|\cF_t]$ is the conditional expectation under probability measure $\q$, and
\begin{align*}
	f(X_s, u_s, h_s, s) & := \frac{1}{2} \Big(\ang{Q_sX_s, X_s}+\ang{ R_su_s, u_s} - h'_s \Phi (X_s, u_s, s) h_s \Big), \\
	\Phi (X_s, u_s, s) & :=\text{diag} (\Phi_1 (X_s, u_s, s), ..., \Phi_d (X_s, u_s, s)),\\
	\Phi_j (X_s, u_s, s) & := \ang{\Xi_{1, j}X_s, X_s} + \ang{ \Xi_{2, j}u_s, u_s} + \xi_{3, j}.
\end{align*}
$\Phi(\cdot)$ represents the ambiguity term. Recall that $x_t \in L^4_{\cF_t}(\Omega; \, \R^n)$ denotes the current state and can be random in \eqref{eq:state_Q}.

Similarly, parameters in the objective are also required to be deterministic and to satisfy the Assumption \ref{assum:obj} below throughout the paper. For a symmetric matrix $Q$, we write $Q \succeq 0$ if $Q$ is positive semidefinite.
\begin{standassum}\label{assum:obj}
	Suppose 
	\begin{itemize}
		\item[(1)] $Q \in L^\infty(0, \, T; \, {\mathbb S}^n)$ and $R \in L^\infty(0, \, T; \, {\mathbb S}^l)$, while $\Xi_{1, j}, G, \nu \in \mathbb S^n$, $\Xi_{2, j} \in {\mathbb S}^l$, $\mu_1 \in \R^{n\times n}$, $\mu_2 \in \R^n$, and $\xi_{3, j} \in \R $ are constants. 
		\item[(2)] $Q\succeq 0, \aee$ and $R\succeq 0, \aee$, while $G\succeq 0$, $\nu \succeq 0$, $\Xi_{1, j} \succeq 0$, $\Xi_{2, j} \succeq 0$, and $\xi_{3, j} \geq 0$.	
	\end{itemize}
\end{standassum}

There are two essential challenges originated from $h$. The first one is the non-linearity in the state \eqref{eq:state_Q} as discussed before. The second one is that the objective also relies on $h$ via the alternative measure $\q$. Our key idea is to augment the state process $X$ by the Radon-Nikod\'ym derivative $\zeta(s;t) = \zeta_s/\zeta_t$ in \eqref{eq:radon}. Specifically, we consider the coupled controlled state processes: for $s\in[t,T]$,
\begin{equation}\label{eq:aug-state}
	\left\{
	\begin{array}{rcll}
		dX_s  &=&  (A_sX_s+B_s'u_s+b_s)ds \\
		& &  +\sum_{j=1}^d(C_s^jX_s+D_s^{j} u_s+\sigma_s^{j} )dW^\p_{js}, & X_t = x_t, \\
		d\zeta(s;t) &=& \zeta(s;t) h'_s dW^\p_s, & \zeta(t;t) = 1.
	\end{array}\right.
\end{equation}

Then, we can express the objective \eqref{eq:obj} as an augmented cost functional
\begin{align}
	J(t, x_t, \zeta; u, h) & = \E^\p_t \left[\int_t^T\zeta(s;t) f(X_s, u_s, h_s, s) ds \right] +\frac{1}{2} \E^\p_t [\zeta(T;t) \ang{G X_T, X_T}] \label{eq:aug-obj} \\
	&\quad -  \frac{1}{2} \ang{\nu \E^\p_t [\zeta(T;t)X_T], \E^\p_t [\zeta(T;t) X_T]}-\ang{ \mu_1 x_t+\mu_2, \E^\p_t [\zeta(T;t) X_T]}. \nonumber
\end{align}
Clearly, $J(t, x_t, \zeta; u, h) = J(t, x_t; u, h)$. Here, the notation $\zeta$ highlights the dependence on $\zeta(\cdot; t)$ and should not be interpreted as the initial value of $\zeta(\cdot; t)$.

\eqref{eq:aug-obj} shows the dependence on $h$ explicitly, while the objective is beyond quadratic in the states and the controls. This expression unifies the measure used in the objective but it will cause technical difficulties in proving expansion results in Theorem \ref{Thm:hvariate} and \ref{Thm:uvariate}. \eqref{eq:aug-obj} demonstrates that preferences of the DM change in a temporally inconsistent fashion as the time involves. The tower rule fails due to the quadratic term of the conditional expectation. State dependence on $x_t$ is another source of TIC.

Furthermore, the problem concerning $h$ is time-inconsistent. We demonstrate this using the MV objective, a special case of \eqref{eq:aug-obj}. Given a control $u$, consider
	\begin{equation}
		\begin{aligned}
			& \sup_h \Big( \frac{1}{2} \E^\p_t \left[\zeta(T;t) X^2_T \right] -  \frac{1}{2} (\E^\p_t [\zeta(T;t)X_T])^2 - \mu_2 \E^\p_t [\zeta(T;t) X_T] \\
			& \qquad - \frac{\xi}{2} \E^\p_t \Big[\int_t^T\zeta(s;t)  h'_s h_s ds \Big] \Big) \label{eq:state_onzeta} \\
			& = \frac{1}{\zeta_t} \sup_h  \Big( \frac{1}{2} \E^\p_t \left[\zeta_T X^2_T \right] -  \frac{1}{2 \zeta_t} (\E^\p_t [\zeta_T X_T])^2 - \mu_2 \E^\p_t [\zeta_T X_T] - \frac{\xi}{2} \E^\p_t \Big[\int_t^T\zeta_s  h'_s h_s ds \Big] \Big).
		\end{aligned}
	\end{equation} 
After factoring out $1/\zeta_t$, \eqref{eq:state_onzeta} still exhibits state dependence on $\zeta_t$ due to the second term from the variance operator. Hence, the problem in \eqref{eq:state_onzeta} is time-inconsistent, which justifies the equilibrium definition for $h$ in \eqref{ineq:h*}.

This paper takes TIC seriously and introduces a game-theoretic framework in line with \cite{hjz12,bkm17,p18,hpw21FS}. Given an admissible control pair $(h^*, u^*)$, for any $t\in [0,T)$ and $\varepsilon>0$, we perturb the admissible control pair with $v\in L^4_{\cF_t}(\Omega; \, \R^l)$ and $\eta \in L^\infty_{\cF_t}(\Omega; \, \R^d)$, defined as the $\varepsilon$-policy pair by
\begin{eqnarray}
	h^{t,\varepsilon,\eta}_s & = & h^*_s+\eta \id_{s\in [t,t+\varepsilon)},\;\;\;s\in[t,T],  \label{hspike} \\
	u^{t,\varepsilon,v}_s & = & u^*_s+v\id_{s\in [t,t+\varepsilon)},\;\;\;s\in[t,T]. \label{uspike}
\end{eqnarray}
If $(h^*, u^*)$ is an equilibrium, it should be a saddle point in the following local sense:
\begin{definition}\label{def:equili}
	Let $h^* \in L^\infty_\F(0,T;\R^d)$ and $u^* \in L^4_\F(0, \, T; \, \R^l)$ be given controls. Let $(X^*, \zeta^*)$ be the corresponding state processes in \eqref{eq:aug-state}. The control pair $(h^*, u^*)$ is called an equilibrium if for $t\in [0,T)$,
	\begin{align} 
		& \limsup_{\varepsilon\downarrow 0} \frac{J(t, X^*_t, \zeta^*; u^*, h^{t,\varepsilon,\eta})-J(t, X^*_t, \zeta^*; u^*, h^*)}{\varepsilon}\le 0, \quad \p\text{-}\as, \label{ineq:h*} \\
		& \liminf_{\varepsilon\downarrow 0} \frac{J(t, X^*_t, \zeta^*; u^{t,\varepsilon,v},h^*)-J(t, X^*_t, \zeta^*; u^*,h^*)}{\varepsilon}\ge 0, \quad \p\text{-}\as, \label{ineq:u*}
	\end{align}
	where $(h^{t,\varepsilon,\eta}, u^{t,\varepsilon,v})$ are defined in \eqref{hspike}-\eqref{uspike} for any perturbations $\eta \in L^\infty_{\cF_t}(\Omega; \, \R^d)$ and $v\in L^4_{\cF_t}(\Omega; \, \R^l)$.
\end{definition}

We compare Definition \ref{def:equili} with several alternative formulations:
	\begin{itemize}
		\item One alternative definition is given by 
		\begin{align}\label{alterdef} 
			& \liminf_{\varepsilon\downarrow 0} \frac{\sup_h J(t, X^*_t, \zeta; u^{t, \varepsilon, v}, h) - \sup_h J(t, X^*_t, \zeta; u^*, h)}{\varepsilon} \geq 0, \quad \p\text{-}\as
		\end{align}
		This approach ignores time inconsistency on $h$ and optimizes over $h$ at each time $t$. The optimizer for $\sup_h J(t, X^*_t, \zeta; u^*, h)$, if it exists, is not necessarily the worst-case scenario for the problem at time $t + \varepsilon$. Due to the time inconsistency highlighted in \eqref{eq:state_onzeta}, the agent will always have self-contradictory views on the worst-case scenario under this definition. To account for time inconsistency on $h$, an equilibrium $h^*$ should be defined in a suitable manner, and $J(t, X^*_t, \zeta^*; u^*, h^*)$ should be used instead of $\sup_h J(t, X^*_t, \zeta; u^*, h)$.
		
		\item \cite{li2023robust} introduces a robust equilibrium strategy in a closed-loop (feedback) setting. Their definition closely resembles that in \cite{hpw21FS}.  In particular, the worst-case scenario in \cite[Definition 1]{li2023robust} is also defined as an equilibrium, addressing time inconsistency on $h$ similarly to our condition \eqref{ineq:h*}. \cite{li2023robust} imposes the preference ordering in which $h^*$ is solved before $u^*$. Additionally, \cite{li2023robust} assumes that drift and volatility terms are within a given set. In contrast, \cite{hpw21FS} employs an entropy penalty.
	\end{itemize}

\subsection{Comparison between Different Equilibrium Controls}\label{sec:prefer}
Before presenting the sufficient conditions for an equilibrium, we compare our Definition \ref{def:equili} on the equilibrium control pair with similar considerations in \cite{hpw21FS,han2022robust}. Intuitively, preference ordering means whether the DM or the fictitious adversary makes choices first.

\begin{itemize}[label=\raisebox{0.25ex}{\tiny$\bullet$}, leftmargin=1.5em]
	\item Definition \ref{def:equili} gives an \textbf{open-loop equilibrium without preference ordering}. For certain cases, an open-loop equilibrium may have a closed-loop representation \cite{sun2016,wang2020}. To highlight the distinction of our formulation, Table \ref{tab:open} gives closed-loop representations of \eqref{hspike}-\eqref{uspike}, if they exist. Here, $X^*$ is a strong solution corresponding to $(h^*, u^*)$. Crucially, perturbations on open-loop controls do not alter the state process inside $(h^*, u^*)$. Besides, the order of solving $h^*$ and $u^*$ does not affect the equilibrium control in Definition \ref{def:equili}, which can be interpreted as a SDG formulation. 
	\begin{table}[!ht]
		\centering
		\begin{tabular}{c|c}
			\hline
			Perturbation on $h^*$ & Perturbation on $u^*$ \\
			\hline
			$u^*_s=u^*(X^*_s, s)$ & $u^{t,\varepsilon,v}_s=u^*(X^*_s, s) + v\id_{s\in [t,t+\varepsilon)}$ \\
			$h^{t,\varepsilon,\eta}_s=h^*(X^*_s, s) + \eta \id_{s\in [t,t+\varepsilon)}$ & $h^*_s=h^*(X^*_s, s)$ \\
			\hline
		\end{tabular}
		\caption{Open-loop controls without preference ordering}\label{tab:open}
	\end{table}
	
	\item \cite{han2022robust} considers an \textbf{open-loop equilibrium with preference ordering}. In this case, the maximization over $h^*$ is solved before the minimization over $u^*$. This suggests that when the control $u$ changes, the worst-case scenario can be adjusted in advance, which is also used in \cite[Definition 2]{li2023robust}. With slightly abuse of notations, denote by $X^*$ the state process corresponding to $(h^*, u^*)$ and by $\tilde X^*$ the state process with the $u^{t, \varepsilon, v}$ and $h^*(\cdot, u^{t,\varepsilon,v}_s, s)$ in Table \ref{tab:preforder}. Though, this formulation is still an open-loop control since $\eta$ does not affect the state process in $h^*$ and $v$ does not affect the state process in $u^*$. Note that the representation $u^*(X^*_s, s)$ and $h^*(\tilde X^*_s, u^{t,\varepsilon,v}_s, s)$ are obtained by the sufficient conditions in \cite{han2022robust}, instead of given a priori.
	\begin{table}[!ht]
		\centering
		\begin{tabular}{c|c}
			\hline
			Perturbation on $h^*$ & Perturbation on $u^*$ \\
			\hline
			$u^{t,\varepsilon,v}_s =  u^*(X^*_s, s) + v\id_{s\in [t,t+\varepsilon)}$ & $u^{t,\varepsilon,v}_s = u^*(X^*_s, s) + v\id_{s\in [t,t+\varepsilon)}$ \\
			$h^{t,\varepsilon,\eta}_s = h^*(\tilde X^*_s, u^{t,\varepsilon,v}_s, s) + \eta \id_{s\in [t,t+\varepsilon)}$ & $h^*_s = h^*( \tilde X^*_s, u^{t,\varepsilon,v}_s, s)$ \\
			\hline
		\end{tabular}
		\caption{Open-loop controls with preference ordering}
		\label{tab:preforder}
	\end{table}
	\item \cite{bmz14,huang2017char,hpw21FS} consider \textbf{a closed-loop (or feedback) equilibrium with preference ordering}. Perturbations under a closed-loop control formulation alter the state process inside the controls. Denote by $X^{t, \varepsilon, v}$ the state process with $(u^{t, \varepsilon, v}(\cdot, s), h^*(\cdot, u^{t,\varepsilon,v}_s, s))$ and by $X^{t, \varepsilon, v, \eta}$ the state process with $u^*(\cdot, s) + v\id_{s\in [t,t+\varepsilon)}$ and $h^*(\cdot, u^{t,\varepsilon,v}_s, s) + \eta\id_{s\in [t,t+\varepsilon)}$. $u^*$ is usually specified as a linear function of $X^*$ in the LQ setting. Table \ref{tab:closed} shows that perturbations $\eta$ and $v$ change the controls substantially, compared with open-loop counterparts. 
	\begin{table}[!ht] 
		\centering
		\begin{tabular}{c|c}
			\hline 
			Perturbation on $h^*$ & Perturbation on $u^*$ \\
			\hline
			$u^{t,\varepsilon,v}_s = u^*(X^{t, \varepsilon, v, \eta}_s, s) + v\id_{s\in [t,t+\varepsilon)}$ & $u^{t,\varepsilon,v}_s = u^*(X^{t,\varepsilon, v}_s, s) + v\id_{s\in [t,t+\varepsilon)}$ \\
			$h^{t,\varepsilon,\eta}_s = h^*(X^{t, \varepsilon, v, \eta}_s, u^{t,\varepsilon,v}_s, s) + \eta \id_{s\in [t,t+\varepsilon)}$ & $h^*_s = h^*(X^{t,\varepsilon, v}_s, u^{t,\varepsilon,v}_s, s)$ \\
			\hline
		\end{tabular}
		\caption{Closed-loop controls with preference ordering}
		\label{tab:closed}
	\end{table}
\end{itemize}

From an economic perspective, the framework in this paper treats the adversary controlling $h$ and the DM selecting $u$ in symmetric positions. The DM without preference ordering is less pessimistic compared to the DM with the preference ordering that allows the adversary to act based on the DM's choice. Hence, our framework leads to more optimistic decisions. Technically, the framework is more flexible and expansion results hold under less restrictive assumptions than \cite{hpw21FS,han2022robust}.

\section{Sufficient Conditions}\label{sec:suff}
To derive sufficient conditions for an equilibrium pair satisfying Definition \ref{def:equili}, we adopt the stochastic maximum principle approach \cite{p90,yz99,hjz12,hpw21FS}. Without the preference ordering, the sufficient conditions in Theorems \ref{Thm:hvariate} and \ref{Thm:uvariate} can be obtained simultaneously, without the need to solve for $h^*$ or $u^*$ beforehand. This edge greatly simplifies our technical assumptions and embraces more examples with semi-analytic solutions.

To characterize $h^*$, define the first-order adjoint process $(p^\zeta(\cdot;t), k^\zeta(\cdot;t))$ satisfying the backward stochastic differential equation (BSDE):
\begin{align}
	\left\{\begin{array}{lcl}
		dp^\zeta(s;t) & = & -\big( (h^*_s)' k^\zeta(s;t) + f(X^*_s, u^*_s, h^*_s, s) \big) ds + k^\zeta(s;t)'dW^\p_s, \quad s\in[t,T],\\
		p^\zeta(T;t) & = & \frac{1}{2} \ang{G X^*_T, X^*_T} - \ang{\nu \mathbb{E}^\p_t [\zeta^*(T;t) X^*_T], X^*_T} - \ang{ \mu_1 x_t + \mu_2, X^*_T}. \label{pzeta}
	\end{array}\right.
\end{align}
Since $u^* \in L^4_\F(0, \, T; \, \R^l)$ and $X^* \in L^4_\F(\Omega; \, C(0, \, T; \, \R^n))$, then we have $f(X^*, u^*, h^*, \cdot) \in L^2_\F(0,T; \R)$ and the boundary condition $p^\zeta(T;t) \in L^2_{\cF_T}(\Omega; \R)$. By \cite[Theorem 2.2, Chapter 7]{yz99}, the BSDE \eqref{pzeta} admits a unique solution $(p^\zeta(\cdot;t), k^\zeta(\cdot;t)) \in L^2_\F(t,T; \R) \times L^2_\F(t,T; \R^d)$.

The following Theorem \ref{Thm:hvariate} expands the cost functional with respect to the perturbation of $h^*$ and gives a sufficient condition of $h^*$.
\begin{theorem}\label{Thm:hvariate}
	For $t\in [0,T)$ and $s \in [t, T]$, define
	\begin{equation}
		\Lambda^\zeta(s;t) := \zeta^*(s;t) k^\zeta(s;t) - \zeta^*(s;t)\Phi(X^*_s, u^*_s, s)h^*_s.
	\end{equation}
	If $h^*$ validates
	\begin{equation}\label{eq:h-suff}
		\Lambda^\zeta( t; t) = 0, \quad \as, \aee, t \in [0, T],
	\end{equation}
	then inequality \eqref{ineq:h*} holds for $h^*$.
\end{theorem}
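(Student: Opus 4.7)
The plan is to apply the classical spike-variation stochastic maximum principle (in the spirit of \cite{yz99,hjz12,p90}), exploiting a simplification specific to varying only $h$: with $u^*$ held fixed, the $\p$-dynamics of $X$ in \eqref{eq:aug-state} do not involve $h$, so the perturbed state coincides with $X^*$ and only $\zeta$ varies. I would set $\delta\zeta(s;t):=\zeta^{t,\varepsilon,\eta}(s;t)-\zeta^*(s;t)$; this satisfies a linear SDE driven on $[t,t+\varepsilon)$ by the spike $\zeta^{t,\varepsilon,\eta}\eta'\,dW^\p$, and since $h^*,\eta$ are essentially bounded, Gr\"onwall/BDG estimates deliver the a priori bound $\E^\p_t[\sup_{s\in[t,T]}|\delta\zeta(s;t)|^p]=O(\varepsilon^{p/2})$ for $p\ge 2$.

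Next I would expand $J^\varepsilon - J^*:=J(t,X^*_t,\zeta^*;u^*,h^{t,\varepsilon,\eta})-J(t,X^*_t,\zeta^*;u^*,h^*)$ into three groups: (i) terms linear in $\delta\zeta$ inherited from the non-spike pieces of the objective together with the cross contribution of the variance-like term; (ii) the $f$-mismatch on $[t,t+\varepsilon)$, equal to $-\E^\p_t[\int_t^{t+\varepsilon}\zeta^{t,\varepsilon,\eta}(s;t)\{\eta'\Phi(X^*_s,u^*_s,s) h^*_s+\tfrac12\eta'\Phi(X^*_s,u^*_s,s)\eta\}\,ds]$, coming from the quadratic dependence of $f$ on $h$; and (iii) a quadratic-in-$\delta\zeta$ remainder $r(\varepsilon)$ from the variance-like piece $-\tfrac12\langle\nu\E^\p_t[\zeta X_T],\E^\p_t[\zeta X_T]\rangle$. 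The terminal value $p^\zeta(T;t)$ in \eqref{pzeta} is chosen exactly so that $\E^\p_t[p^\zeta(T;t)\delta\zeta(T;t)]$ reproduces all linear-in-$\delta\zeta$ contributions of group (i). Applying It\^o's product rule to $p^\zeta(s;t)\delta\zeta(s;t)$, the cross-variation $\delta\zeta(k^\zeta)'h^*\,ds$ cancels $-\delta\zeta(h^*)'k^\zeta\,ds$ from the drift of $p^\zeta$, while the $-\delta\zeta f\,ds$ drift absorbs the $\delta\zeta f$ integral arising in group (i). Collecting terms yields
\begin{equation*}
J^\varepsilon - J^* = \E^\p_t\!\left[\int_t^{t+\varepsilon}\!\zeta^{t,\varepsilon,\eta}(s;t)\!\left\{\eta'\bigl(k^\zeta(s;t)-\Phi(X^*_s,u^*_s,s)h^*_s\bigr)-\tfrac12\eta'\Phi(X^*_s,u^*_s,s)\eta\right\}ds\right]+r(\varepsilon).
\end{equation*}

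Dividing by $\varepsilon$ and letting $\varepsilon\downarrow 0$, the right-continuity of the integrand in $s$ at $s=t$ together with $\zeta^{t,\varepsilon,\eta}(t;t)=\zeta^*(t;t)=1$ and the dominated convergence theorem give
\begin{equation*}
\limsup_{\varepsilon\downarrow 0}\frac{J^\varepsilon-J^*}{\varepsilon} = \eta'\Lambda^\zeta(t;t) - \tfrac12\eta'\Phi(X^*_t,u^*_t,t)\eta,\qquad \p\text{-}\as,\ \aee\ t,
\end{equation*}
after recognizing $k^\zeta(t;t)-\Phi(X^*_t,u^*_t,t)h^*_t=\Lambda^\zeta(t;t)$. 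Under the hypothesis $\Lambda^\zeta(t;t)=0$ this reduces to $-\tfrac12\eta'\Phi(X^*_t,u^*_t,t)\eta\le 0$, since $\Phi$ is block-diagonal with nonnegative blocks by Assumption \ref{assum:obj}, and \eqref{ineq:h*} follows.

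The main technical obstacle is controlling $r(\varepsilon)$: this piece equals $-\tfrac12\langle\nu\E^\p_t[\delta\zeta(T;t) X^*_T],\E^\p_t[\delta\zeta(T;t) X^*_T]\rangle$, and a crude Cauchy-Schwarz applied to $\delta\zeta(T;t)$ only yields $O(\varepsilon)$, which would contaminate the leading order above. The refinement to $o(\varepsilon)$ rests on the fact that both $\zeta^{t,\varepsilon,\eta}(\cdot;t)$ and $\zeta^*(\cdot;t)$ are $\p$-martingales starting from $1$ at $s=t$, so $\E^\p_t[\delta\zeta(T;t)]=0$; conditioning on $\cF_{t+\varepsilon}$ and invoking It\^o isometry on the short spike interval $[t,t+\varepsilon]$ upgrades the bound $\E^\p_t[\delta\zeta(T;t)X^*_T]=O(\sqrt{\varepsilon})$ to $O(\varepsilon)$, whence $r(\varepsilon)=O(\varepsilon^2)$. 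The remaining arguments, namely the justification of dominated convergence for the spike integral and the continuity of the integrand at $s=t$, are routine under the standing moment bounds and the essential boundedness of $h^*$ and $\eta$.
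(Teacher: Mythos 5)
Your overall strategy --- spike variation in $h$ only (so that $X^*$ is untouched and only the density process is perturbed), duality with the first-order adjoint BSDE \eqref{pzeta}, and using $\Phi\succeq 0$ to absorb the quadratic-in-$\eta$ term --- is exactly the paper's argument; working directly with $\delta\zeta=\zeta^{t,\varepsilon,\eta}-\zeta^*$ instead of the paper's expansion $Y^\zeta+Z^\zeta+R^\zeta$ is only a cosmetic difference. The one genuine divergence is your treatment of the cross term $r(\varepsilon)=-\tfrac12\ang{\nu\E^\p_t[\delta\zeta(T;t)X^*_T],\E^\p_t[\delta\zeta(T;t)X^*_T]}$, and this is where your proof has a gap. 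The claim that conditioning on $\cF_{t+\varepsilon}$ plus It\^o isometry on the spike interval upgrades $\E^\p_t[\delta\zeta(T;t)X^*_T]$ to $O(\varepsilon)$, hence $r(\varepsilon)=O(\varepsilon^2)$, is not established by that argument. Since $\delta\zeta(T;t)=\delta\zeta(t+\varepsilon;t)\,\zeta^*(T;t+\varepsilon)$, conditioning gives $\E^\p_t[\delta\zeta(T;t)X^*_T]=\E^\p_t\big[\delta\zeta(t+\varepsilon;t)\,(g_{t+\varepsilon}-g_t)\big]$ with $g_s:=\E^\p_s[\zeta^*(T;s)X^*_T]$ (the $g_t$ term vanishes by the martingale property, as you note). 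Cauchy--Schwarz then yields $O(\sqrt{\varepsilon})\cdot\big(\E^\p_t[|g_{t+\varepsilon}-g_t|^2]\big)^{1/2}$, so your rate requires an $O(\sqrt{\varepsilon})$ conditional $L^2$-modulus of continuity for $g$; that does not follow from the isometry on $[t,t+\varepsilon]$, and it is not guaranteed under the standing assumptions, because the martingale-representation integrand of $g$ involves $u^*\in L^4_\F(0,T;\R^l)$ and $X^*$, which are not essentially bounded. Without extra regularity one only gets $\E^\p_t[|g_{t+\varepsilon}-g_t|^2]\to 0$, i.e.\ $r(\varepsilon)=o(\varepsilon)$, and even that needs a conditional dominated-convergence argument (for a.e.\ $t$) which you have not supplied.

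The paper sidesteps this entirely: it keeps the cross term (with $Y^\zeta_T$ in place of $\delta\zeta(T;t)$), concedes that it is only $O(\varepsilon)$ in general, and observes that $\nu\succeq 0$ makes $\ang{\nu\E^\p_t[Y^\zeta_TX^*_T],\E^\p_t[Y^\zeta_TX^*_T]}\ge 0$, so the term enters the difference $J^\varepsilon-J^*$ with a favorable sign; since \eqref{ineq:h*} only asks for a one-sided bound ($\limsup\le 0$, not an exact limit), this suffices. If you replace your negligibility claim by this sign observation, your argument closes and coincides with the paper's proof; as written, the $O(\varepsilon^2)$ step is unsupported.
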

Compared to Theorem \ref{Thm:uvariate}, the second-order condition in Theorem \ref{Thm:hvariate} holds automatically due to the variance operator and the assumption that $\Phi \succeq 0$. As a result, there is no need to introduce the corresponding second-order adjoint process.

To prove a sufficient condition of $u^*$, introduce $(p^x(\cdot;t), (k^{x, j}(\cdot;t))_{j=1,\cdots, d})\in L^2_\F(t,T;\R^n) \times (L^2_\F(t,T;\R^n))^d$ as the solution to
\begin{align}
	\left\{\begin{array}{ll}
		dp^x(s;t) = & -\big( A'_s p^x(s;t) + \sum^d_{j=1}(C_s^j)' k^{x,j}(s;t) + \zeta^*(s;t) f^*_x(X^*_s, u^*_s, h^*_s, s) \big) ds \\
		& +\sum_{j=1}^d k^{x,j}(s;t)dW^\p_{js},  \quad s\in[t,T],\\
		p^x(T;t) = & \zeta^*(T;t) G X^*_T - \zeta^*(T;t) \nu \mathbb{E}^\p_t [\zeta^*(T;t) X^*_T] - \zeta^*(T;t) (\mu_1 x_t+\mu_2). \label{eq:px}
	\end{array}\right.
\end{align}
Define the second-order adjoint process $(P(\cdot;t), (K^j(\cdot;t))_{j=1,\cdots,d})\in L^2_\F(t,T;\mathbb S^n)$ $\times$ $ (L^2_\F(t,T;\mathbb S^n))^d$ as the solution to BSDE \eqref{eq:P}:
\begin{align}
	\left\{\begin{array}{lll}
		dP(s;t) &=& -\Big( A'_s P(s;t) + P(s;t) A_s + \zeta^*(s;t) Q_s \\
		& & \hspace{0.5cm} - \frac{1}{2} \zeta^*(s;t) (h^*_s)' \Phi_{xx} (X^*_s, u^*_s, s) h^*_s \\
		& & \hspace{0.5cm} +\sum_{j=1}^d \big( (C^j_s)'P(s;t)C^j_s + (C^j_s)'K^j(s;t) + K^j(s;t) C^j_s \big) \Big) ds\\
		& & +\sum_{j=1}^dK^j(s;t)dW^\p_{js}, \quad s\in[t,T],\\
		P(T;t) & = & \zeta^*(T;t) (G - \nu). \label{eq:P}
	\end{array}\right.
\end{align}
\eqref{eq:px} and \eqref{eq:P} are well-posed.

Similarly, Theorem \ref{Thm:uvariate} gives a sufficient condition for $u^*$.
\begin{theorem}\label{Thm:uvariate}
	Suppose $G \succeq \nu$. For $t\in [0,T)$ and $s \in [t, T]$, define
	\begin{align*}
		\Lambda^x(s; t) &:= B_s p^x(s;t)+\sum_{j=1}^d(D_s^j)' k^{x,j}(s;t) + \zeta^*(s;t) R_s u^*_s - \zeta^*(s;t) \sum^d_{j=1}(h^*_{js})^2 \Xi_{2, j} u^*_s, \\
		\Sigma(s; t) &:= \zeta^*(s;t) R_s - \zeta^*(s;t) \sum^d_{j=1} (h^*_{js})^2 \Xi_{2, j}  + \sum_{j=1}^d (D_s^j)'P(s;t)D_s^j.
	\end{align*}
	If $u^*$ satisfies
	\begin{equation}\label{eq:u-suff}
		\Lambda^x(t; t) = 0, \quad \Sigma(t; t) \succeq 0, \quad \as, \aee \; t \in [0, T],
	\end{equation}
	then inequality \eqref{ineq:u*} holds for $u^*$.
\end{theorem}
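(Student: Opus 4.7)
The plan is to run the standard spike-variation / stochastic maximum principle argument, now perturbing $u^*$ while keeping $h^*$ (and hence the Radon-Nikod\'ym component $\zeta^*(\cdot;t)$) fixed. Writing $X^{t,\varepsilon,v}$ for the state driven by $(h^*, u^{t,\varepsilon,v})$ and $\delta X^\varepsilon := X^{t,\varepsilon,v} - X^*$, the difference satisfies a linear SDE whose inhomogeneities are supported on $[t, t+\varepsilon)$. Standard $L^p$ estimates (cf.\ \cite[Chapter 3]{yz99}), which rely on the $L^4$ integrability built into Standing Assumption \ref{assum:state} and Definition \ref{def:admi}, yield $\E^\p_t\big[\sup_{s\in[t,T]}|\delta X^\varepsilon_s|^2\big] = O(\varepsilon)$ and $\E^\p_t\big[\sup_s|\delta X^\varepsilon_s|^4\big] = O(\varepsilon^2)$.

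Set $\Delta J := J(t, X^*_t, \zeta^*; u^{t,\varepsilon,v}, h^*) - J(t, X^*_t, \zeta^*; u^*, h^*)$, and Taylor-expand each summand of \eqref{eq:aug-obj} in $\delta X^\varepsilon$. The linear-in-$\delta X^\varepsilon_T$ contributions from the three terminal terms (the $G$-quadratic, the variance term through $\nu$, and the cross term in $\mu_1 x_t+\mu_2$) add up, after pulling the $\cF_t$-measurable factors inside the conditional expectation, to exactly $\E^\p_t[\langle p^x(T;t), \delta X^\varepsilon_T\rangle]$ with $p^x(T;t)$ as in \eqref{eq:px}. For the quadratic-in-$\delta X^\varepsilon_T$ part, the Jensen-type inequality $\langle \nu\,\E^\p_t[\zeta^*(T;t)\delta X^\varepsilon_T], \E^\p_t[\zeta^*(T;t)\delta X^\varepsilon_T]\rangle \le \E^\p_t[\zeta^*(T;t)\langle \nu\,\delta X^\varepsilon_T, \delta X^\varepsilon_T\rangle]$ (valid since $\nu \succeq 0$ and $\E^\p_t[\zeta^*(T;t)]=1$) produces a lower bound $\tfrac{1}{2}\E^\p_t[\langle P(T;t)\delta X^\varepsilon_T, \delta X^\varepsilon_T\rangle]$, with $P(T;t)=\zeta^*(T;t)(G-\nu)\succeq 0$ thanks to the hypothesis $G \succeq \nu$. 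Next, I would apply It\^o to $\langle p^x(s;t), \delta X^\varepsilon_s\rangle$ and $\langle P(s;t)\delta X^\varepsilon_s, \delta X^\varepsilon_s\rangle$ on $[t,T]$ and substitute the dynamics in \eqref{eq:px} and \eqref{eq:P}; the drift terms of the two adjoint BSDEs are designed precisely so that they cancel against the running-cost expansion (through $\zeta^* f^*_x$ and the $\Phi_{xx}$ piece) and against the cross It\^o terms generated by the $(A_s, C^j_s)$ coefficients. After these cancellations, collecting terms in $v$ yields
\begin{equation*}
\Delta J \;\ge\; \E^\p_t\!\left[\int_t^{t+\varepsilon}\!\Big(\langle \Lambda^x(s;t), v\rangle + \tfrac{1}{2}\langle \Sigma(s;t)\,v, v\rangle\Big)\, ds\right] + o(\varepsilon),
\end{equation*}
the $o(\varepsilon)$ remainder absorbing the mixed-order terms of the form $|v|\cdot|\delta X^\varepsilon|$ and higher through the moment bounds above. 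Dividing by $\varepsilon$ and invoking right-continuity in $s$ at $s=t$ of the BSDE solutions $(p^x(\cdot;t), k^{x,j}(\cdot;t), P(\cdot;t))$ together with $\zeta^*(t;t)=1$, the $\liminf$ is bounded below by $\langle \Lambda^x(t;t), v\rangle + \tfrac{1}{2}\langle \Sigma(t;t)\,v, v\rangle$, which is $\ge 0$ by hypothesis \eqref{eq:u-suff}.

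The principal difficulty is the bookkeeping around the non-standard features of \eqref{eq:aug-obj} that break the classical LQ/tower-property calculus: the outer-product variance through $\nu$, the state-dependent cross term through $x_t$, and the $u$-dependent part of $\Phi$. The first two must be treated as frozen $\cF_t$-measurable constants when differentiating in $X_T$, which is precisely why $p^x(T;t)$ and $P(T;t)$ depend parametrically on $(t, x_t)$ and carry $\zeta^*(\cdot;t)$ as a weight throughout \eqref{eq:px}--\eqref{eq:P}. The $\Xi_{2,j}$ piece of $\Phi$ is what generates the $-\zeta^*\sum_j (h^*_{js})^2 \Xi_{2,j}$ contributions appearing inside both $\Lambda^x$ and $\Sigma$. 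Checking that $G \succeq \nu$ together with $\nu, \Xi_{2,j}\succeq 0$ is enough to prevent any negative residual from the Jensen gap and from the second-order cost expansion from surviving into the lower bound, and tracking indices through the It\^o expansion of $\langle P\,\delta X^\varepsilon, \delta X^\varepsilon\rangle$ carefully, constitutes the bulk of the technical work.
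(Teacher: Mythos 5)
Your proposal follows essentially the same route as the paper's proof: spike variation in $u$ with $h^*$ frozen, the exact first/second-order decomposition of $X^{t,\varepsilon,v}-X^*$ with the standard $O(\varepsilon)$/$O(\varepsilon^2)$ moment bounds, the Jensen-type lower bound for the $\nu$-variance term (so that the quadratic terminal weight becomes $\zeta^*(T;t)(G-\nu)=P(T;t)$), It\^o with the adjoint BSDEs \eqref{eq:px} and \eqref{eq:P} to cancel the running-cost expansion, and the resulting lower bound $\E^\p_t[\int_t^{t+\varepsilon}(\langle\Lambda^x(s;t),v\rangle+\tfrac12\langle\Sigma(s;t)v,v\rangle)ds]+o(\varepsilon)$. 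The argument is correct; your explicit mention of the right-continuity/Lebesgue-point step at $s=t$ only makes explicit what the paper leaves implicit.
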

The sufficient condition \eqref{eq:u-suff} is closely related to the optimality condition for the variational inequality in the stochastic maximum principle, as outlined in \cite[Theorem 3.2]{yz99}.

Compared with \cite{hjz12}, we require one more condition $G \succeq \nu$ to tackle with the difficulty that moment estimate $\ang{\nu \E^\p_t [\zeta^*(T;t) Y^x_T], \E^\p_t [\zeta^*(T;t) Y^x_T]} = O(\varepsilon)$ is not sufficient. In view of the boundary condition for $P(T; t)$, the assumption $G \succeq \nu$ is needed for $\Sigma(t; t) \succeq 0$. If we have an estimate of $o(\varepsilon)$ in special cases, we can define $P(s;t)$ with $G$ instead of $G - \nu$, as in the classical cases. The requirement $G \succeq \nu$ can then be removed.
\begin{corollary}\label{cor:special}
	Define $P(s;t)$ with $G$ instead of $G - \nu$ in \eqref{eq:P}. Suppose $C^j=0$. Then the claim in Theorem \ref{Thm:uvariate} holds without the assumption that $ G \succeq \nu$.
\end{corollary}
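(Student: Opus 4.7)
The plan is to revisit the expansion of the cost functional carried out in the proof of Theorem \ref{Thm:uvariate} and sharpen one specific error estimate under the hypothesis $C^j = 0$; this sharper estimate permits keeping $G$ (rather than $G - \nu$) in the terminal condition of the second-order adjoint equation \eqref{eq:P}, thereby removing the requirement $G \succeq \nu$.

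First I would recall the role of the $\nu$-term in the proof of Theorem \ref{Thm:uvariate}. The first-order variational process $Y^x$ (driven by the spike $v \id_{[t,t+\varepsilon)}$) enters the expansion of $J(t, X^*_t, \zeta; u^{t,\varepsilon,v}, h^*) - J(t, X^*_t, \zeta; u^*, h^*)$ both linearly and quadratically. The quadratic piece $-\tfrac{1}{2}\ang{\nu \E^\p_t[\zeta^*(T;t) Y^x_T], \E^\p_t[\zeta^*(T;t) Y^x_T]}$, coming from the $-\tfrac{1}{2}\ang{\nu \mathbb{E}^\q_t[X_T], \mathbb{E}^\q_t[X_T]}$ piece of \eqref{eq:obj}, is only $O(\varepsilon)$ in the general case and is absorbed into the second-order adjoint by replacing $P(T;t)=\zeta^*(T;t)G$ with $P(T;t)=\zeta^*(T;t)(G-\nu)$; this is precisely the step that forces $G \succeq \nu$ in order to keep $\Sigma(t;t) \succeq 0$.

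Next I would prove that when $C^j=0$ this offending term is in fact $O(\varepsilon^2) = o(\varepsilon)$, so that no absorption is needed. With $C^j=0$, the variational SDE simplifies to
\begin{equation*}
dY^x_s = \bigl(A_s Y^x_s + B_s' v\, \id_{[t,t+\varepsilon)}\bigr)\,ds + \sum_{j=1}^d D^j_s v\, \id_{[t,t+\varepsilon)}\,dW^\p_{js}, \qquad Y^x_t=0.
\end{equation*}
By Bayes' rule, $\E^\p_t[\zeta^*(T;t) Y^x_T] = \E^{\q^*}_t[Y^x_T]$, where $\q^*$ is the measure associated with $h^*$ via \eqref{eq:radon}. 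Rewriting the SDE under $\q^*$ using $dW^\p_s = dW^{\q^*}_s + h^*_s\,ds$, taking $\cF_t$-conditional $\q^*$-expectations, and using that $v$ is $\cF_t$-measurable together with the fact that the $dW^{\q^*}$ integral is a true $\q^*$-martingale, I obtain the scalar linear integral equation
\begin{equation*}
\E^{\q^*}_t[Y^x_s] = \int_t^s A_r\, \E^{\q^*}_t[Y^x_r]\,dr + \int_t^{s\wedge(t+\varepsilon)} \Bigl(B_r' + \textstyle\sum_j D^j_r\, \E^{\q^*}_t[h^*_{jr}]\Bigr) v\,dr.
\end{equation*}
Since $A$, $B$, $D^j$ are bounded and $h^* \in L^\infty_\F(0,T;\R^d)$, the forcing term is almost surely bounded by $C\varepsilon|v|$, and Gronwall's inequality yields $|\E^{\q^*}_t[Y^x_T]| \leq C\varepsilon|v|$ a.s. Consequently $|\ang{\nu\, \E^{\q^*}_t[Y^x_T], \E^{\q^*}_t[Y^x_T]}| \leq C|\nu|\varepsilon^2|v|^2 = o(\varepsilon)$.

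With this improved estimate, the remainder of the argument is a direct transcription of the proof of Theorem \ref{Thm:uvariate}: setting $P(T;t)=\zeta^*(T;t)G$ and dropping the $\nu$-quadratic term into the $o(\varepsilon)$ residue, the expansion delivers
\begin{equation*}
\liminf_{\varepsilon\downarrow 0}\frac{J(t, X^*_t, \zeta^*; u^{t,\varepsilon,v}, h^*) - J(t, X^*_t, \zeta^*; u^*, h^*)}{\varepsilon} = \ang{\Lambda^x(t;t), v} + \tfrac{1}{2}\ang{\Sigma(t;t)v, v},
\end{equation*}
whence \eqref{eq:u-suff} yields \eqref{ineq:u*} without invoking $G\succeq\nu$. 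The main (and essentially the only) obstacle is the sharper $O(\varepsilon^2)$ bound: this closure of the $\q^*$-conditional expectation into a linear ODE forced only on $[t,t+\varepsilon]$ breaks down for $C^j\ne 0$ because of the coupling $C^j h^*_j Y^x$ in the drift, which can only be controlled through $\E^{\q^*}_t[|Y^x_r|^2]^{1/2}=O(\sqrt{\varepsilon})$ and leads merely to $O(\varepsilon)$.
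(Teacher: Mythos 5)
Your proposal is correct and follows essentially the same route as the paper: the paper likewise rewrites the variational process under the measure associated with $h^*$, uses that with $C^j=0$ (and deterministic $A$) the conditional mean $\E^\p_t[\zeta^*(s;t)Y^x_s]=\E^*_t[Y^x_s]$ solves a linear equation forced only on $[t,t+\varepsilon]$ and is hence $O(\varepsilon)$, so the $\nu$-quadratic term is $o(\varepsilon)$ and $P(T;t)$ can be defined with $G$, removing the need for $G\succeq\nu$. Your only deviation is bookkeeping (placing $B'_sv$ in the drift of $Y^x$ rather than in $Z^x$), which does not affect the estimate.
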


Before discussing specific examples, we explain the key difference between Theorem \ref{Thm:uvariate} and the case where preference ordering is imposed. The following explanation is heuristic and informal. Suppose the sufficient condition \eqref{eq:h-suff} for $h^*$ leads to an equilibrium $h^*_s = h^*(\tilde{X}^*_s, u^{t,\varepsilon,v}_s, s)$, where $\tilde{X}^*$ is defined as in Table \ref{tab:preforder}. In the case of preference ordering, the agent considers the state process \eqref{eq:aug-state} and the objective \eqref{eq:aug-obj}, with $h$ replaced by $h^*(X_s, u_s, s)$. We need the derivatives of $h^*(X_s, u_s, s)$ with respect to $X$ and $u$ when defining the adjoint processes and performing Taylor expansions of the cost functional. Following a procedure similar to \cite[Chapter 3.1]{yz99}, we expect that the corresponding first-order condition is given by
	\begin{equation}\label{eq:conjecture}
		B_t p^x(t;t) +\sum_{j=1}^d(D_t^j)' k^{x,j}(t;t) + h^*_u(X^*_t, u^*_t, t)' k^\zeta(t ;t)  + f^h_u (X^*_t, u^*_t, t) = 0,
	\end{equation}
where $f^h_u$ is the partial derivative of $f^h(X^*_t, u^*_t, t) := f(X^*_t, u^*_t, h^*(X^*_t, u^*_t, t), t)$ with respect to $u$. We omit the details on modifying the adjoint processes. To establish that \eqref{eq:conjecture} is a sufficient condition for the equilibrium $u^*$ with preference ordering, additional technical assumptions or specifications are required to ensure the existence of $h^*(\tilde{X}^*_s, u^{t,\varepsilon,v}_s, s)$ and its derivatives. This is beyond the scope of the present paper. Since $h^*$ can be adjusted according to $u$ when preference ordering is imposed, the worst-case scenario may be more severe than in our case without preference ordering.

\section{Examples and Well-posedness Results}\label{sec:examples}
In this section, we consider one dimensional process $X$, i.e. $n = 1$. Suppose the dimensions of Brownian motion and control $u$ are equal, i.e. $d = l$. We mainly consider the state-dependent risk aversion and thus set $\mu_2 = 0$. Let the ambiguity aversion (intercept) coefficient $\xi_{3, j} = 0$. Besides, assume parameters $b = 0, \sigma^j = 0$. We look for an equilibrium $(h^*, u^*)$ as defined in Definition \ref{def:equili}, by solving the sufficient conditions in Theorems \ref{Thm:hvariate} and \ref{Thm:uvariate}.

\subsection{State-dependent ambiguity aversion}
Consider the ambiguity aversion with a preference function depending on the process $X$ as $\Phi_j (X_s, u_s, s) = \xi_1 X^2_s$. For convenience, we summarize the process $X$ and $f$ in the objective functional again as follows:
\begin{align*}
	dX_s  &= (A_sX_s+B_s'u_s)ds + (C_s X_s + D_s u_s)'dW^\p_s, \\
	f(X_s, u_s, h_s, s) &= \frac{1}{2} \Big(Q_s X^2_s + \ang{ R_su_s, u_s} - \xi_1 h'_s h_s X^2_s \Big).
\end{align*}

To derive explicit solutions to the adjoint processes \eqref{pzeta} and \eqref{eq:px}, we are inspired by their terminal conditions and attempt with the following ansatz with $(X^*, \zeta^*(\cdot; t))$ corresponding to an equilibrium pair $(h^*, u^*)$:
\begin{align}
	p^\zeta(s; t) &= L_s (X^*_s)^2 - H_s X^*_s \E^\p_t [ \zeta^*(s;t) X^*_s] - F_s X^*_s X^*_t, \label{ans:pzeta} \\
	p^x(s;t) &= \zeta^*(s;t) M_s X^*_s - \zeta^*(s;t) N_s \E^\p_t [ \zeta^*(s;t) X^*_s] - \zeta^*(s;t) \Gamma_s X^*_t. \label{ans:px}
\end{align}
It\^o's lemma yields
\begin{align}
	d p^\zeta(s; t) =& [\dot{L}_s (X^*_s)^2 + 2 L_s X^*_s (A_s X^*_s + B'_s u^*_s) + L_s |C X^*_s + D_s u^*_s|^2]ds \nonumber \\ 
	& + 2 L_s X^*_s ( C X^*_s + D_s u^*_s)' dW^\p_s \label{eq:Ito-pzeta} \\ 
	& - \E^\p_t[\zeta^*(s;t) X^*_s] [\dot{H}_s X^*_s + H_s(A_s X^*_s + B'_s u^*_s)] ds \nonumber \\
	& - \E^\p_t[\zeta^*(s;t) X^*_s] H_s (C_s X^*_s + D_s u^*_s)' dW^\p_s \nonumber \\
	& - H_s X^*_s \E^\p_t[ \zeta^*(s; t) (A_s X^*_s + B'_s u^*_s)]ds \nonumber \\
	& - H_s X^*_s \E^\p_t[ \zeta^*(s; t) (C_s X^*_s + D_s u^*_s)'h^*_s]ds \nonumber \\
	& - \dot{F}_s X^*_s X^*_t ds - F_s X^*_t (A_s X^*_s + B'_s u^*_s)ds - F_s X^*_t(C_s X^*_s + D_s u^*_s)' dW^\p_s. \nonumber
\end{align}

Comparing with the definition of $p^\zeta(s;t)$ in \eqref{pzeta}, $dW^\p_s$ part implies
\begin{align*}
	k^\zeta(s;t) = & 2 L_s X^*_s ( C X^*_s + D_s u^*_s) - \E^\p_t[\zeta^*(s;t) X^*_s] H_s (C_s X^*_s + D_s u^*_s) \\
	& - F_s X^*_t(C_s X^*_s + D_s u^*_s).
\end{align*}

For $p^x(s;t)$, we have
\begin{align}
	d p^x(s;t) = & \big[\dot{M}_s \zeta^*(s;t) X^*_s + M_s \zeta^*(s;t) (A_s X^*_s + B'_s u^*_s)  \label{eq:Ito-px} \\
	& \;\; + M_s \zeta^*(s;t) (C_s X^*_s + D_s u^*_s)' h^*_s \big]ds \nonumber \\
	& + M_s \zeta^*(s;t) (C_s X^*_s + D_s u^*_s)' dW^\p_s + M_s \zeta^*(s;t) X^*_s (h^*_s)' dW^\p_s \nonumber \\
	& - N_s \zeta^*(s;t) \E^\p_t[ \zeta^*(s;t) (A_s X^*_s + B'_s u^*_s + (C_s X^*_s + D_s u^*_s)' h^*_s)]ds \nonumber \\
	& -  \dot{N}_s \zeta^*(s;t) \E^\p_t[ \zeta^*(s;t) X^*_s] ds -  N_s \zeta^*(s;t) \E^\p_t[ \zeta^*(s;t) X^*_s] (h^*_s)' dW^\p_s \nonumber \\
	& - X^*_t \dot{\Gamma}_s \zeta^*(s;t)ds - X^*_t \Gamma_s \zeta^*(s;t) (h^*_s)' dW^\p_s. \nonumber
\end{align}
The $dW^\p_s$ part implies that
\begin{align*}
	k^x(s;t) = &  \zeta^*(s;t) (C_s X^*_s + D_s u^*_s) M_s +  \zeta^*(s;t) M_s X^*_s h^*_s \\
	& -  N_s \zeta^*(s;t) \E^\p_t[ \zeta^*(s;t) X^*_s] h^*_s - \zeta^*(s;t) X^*_t \Gamma_s  h^*_s.
\end{align*}

In view of the sufficient conditions \eqref{eq:h-suff} and \eqref{eq:u-suff} for $h^*$ and $u^*$, we expect that $u^*_s = \alpha^*_s X^*_s$ and $h^*$ is deterministic. The sufficient condition for $h^*$ gives
\begin{equation*}
	\xi_1 (X^*_t)^2 h^*_t = k^\zeta(t;t) = (2 L_t - H_t - F_t) (C_t + D_t \alpha^*_t) (X^*_t)^2. 
\end{equation*}
Denote $E_t := 2 L_t - H_t - F_t$. Since $ X^*_t > 0$, then
\begin{equation}\label{eq:h*}
	h^*_t = \frac{E_t}{\xi_1}(C_t + D_t \alpha^*_t).
\end{equation}

The sufficient condition for $u^*$ leads to
\begin{align*}
	B_t p^x(t;t) + D'_t k^x(t;t) + R_t u^*_t =&  B_t( M_t - N_t - \Gamma_t) X^*_t + D'_t  (C_t + D_t \alpha^*_t) M_t X^*_t  \\
	& + D'_t h^*_t (M_t - N_t - \Gamma_t) X^*_t + R_t \alpha^*_t X^*_t \\
	=&  0.
\end{align*}
Denote $\Delta_t := M_t - N_t - \Gamma_t$. With the expression of $h^*_t$ in \eqref{eq:h*}, we simplify $\alpha^*$ as
\begin{equation}\label{eq:alpha*}
	\alpha^*_t = - [R_t + (M_t + \Delta_t E_t/\xi_1)D'_t D_t]^{-1} [(M_t + \Delta_t E_t/\xi_1)D'_t C_t + B_t \Delta_t].
\end{equation}

To obtain the ODE system for the coefficients in the ansatz, we compare the $ds$ parts in \eqref{pzeta} and \eqref{eq:px}, the definition of BSDEs, with the expressions in \eqref{eq:Ito-pzeta} and \eqref{eq:Ito-px}, derived from the ansatz.
The $ds$ part in \eqref{eq:Ito-pzeta} should equal to
\begin{align*}
	& - 2 L_s X^*_s ( C X^*_s + D_s u^*_s)' h^*_s - \E^\p_t[\zeta^*(s;t) X^*_s] H_s (C_s X^*_s + D_s u^*_s)' h^*_s \\
	& - F_s X^*_t(C_s X^*_s + D_s u^*_s)' h^*_s - \frac{1}{2} Q_s (X^*_s)^2 - \frac{1}{2} \ang{R_s u^*_s, u^*_s} + \frac{1}{2} \xi_1 (X^*_s)^2 |h^*_s|^2.
\end{align*}
Matching with the corresponding terms in \eqref{pzeta}, we have 
\begin{equation} \label{eq:LHF}
	\left\{
	\begin{aligned}
		\dot{L}_s =& - [2(A_s + B'_s \alpha^*_s) + 2 ( C_s + D_s \alpha^*_s)' h^*_s + | C_s + D_s \alpha^*_s|^2] L_s & \\
		& \quad -\frac{1}{2} Q_s - \frac{1}{2} \ang{R_s \alpha^*_s, \alpha^*_s} + \xi_1 \frac{|h^*_s|^2}{2}, & L_T = \frac{G}{2}, \\
		\dot{H}_s =& - [2(A_s + B'_s \alpha^*_s) + 2 (C_s + D_s \alpha^*_s)' h^*_s] H_s, & H_T = \nu, \\
		\dot{F}_s =& - [(A_s + B'_s \alpha^*_s) + (C_s + D_s \alpha^*_s)' h^*_s] F_s, & F_T = \mu_1.
	\end{aligned}
	\right.
\end{equation}
Similarly, matching the $ds$ part in \eqref{eq:px} and \eqref{eq:Ito-px}, we obtain
\begin{align}
	\dot{M}_s =& - [2A_s + B'_s \alpha^*_s + (2 C_s + D_s \alpha^*_s)' h^*_s + |C_s|^2 + C'_s D_s \alpha^*_s] M_s     \nonumber \\
	& - Q_s + \xi_1 |h^*_s|^2, \quad M_T = G, \nonumber \\
	\dot{N}_s =& - [2A_s + B'_s \alpha^*_s + (2C_s + D_s \alpha^*_s)' h^*_s] N_s, \quad N_T = \nu, \label{eq:MNGamma} \\
	\dot{\Gamma}_s =& - [A_s + C'_s h^*_s] \Gamma_s, \quad \Gamma_T = \mu_1. \nonumber
\end{align}

With $h^*$ in \eqref{eq:h*} and $\alpha^*$ in \eqref{eq:alpha*}, the ODE systems \eqref{eq:LHF} and \eqref{eq:MNGamma} are coupled and should be viewed as a system with six variables $(L, H, F, M, N, \Gamma)$. It is far complicated than the classical Riccati-type equations in TC or TIC LQ controls \cite{yz99,lz02,hjz12}. Indeed, take the term $(\alpha^*_s)'h^*_s L_s$ as an example. It has terms such as $\Delta^2_s E^3_s L_s$, which poses great challenges to the proof of the well-posedness of \eqref{eq:LHF} and \eqref{eq:MNGamma}. Briefly speaking, we use the truncation method to validate that the solutions $(L, H, F, M, N, \Gamma)$ are positive under certain conditions. Moreover, we anticipate that $\Delta \leq 0$ and $E \leq 0$.

In the following lemma and its proof, we adopt the convention that notations with an underline (resp. upper line), e.g. $\ubar{m}$ (resp. $\bar{m}$), mean the lower (resp. upper) bound.
\begin{lemma}\label{Lem:StateODE}
	Consider constants $\ubar{m}$, $\bar{m}$, $\ubar{\delta}$, and $\ubar{e}$ satisfying
	\begin{align*}
		&0 < \ubar{m} \leq G \leq \bar{m}, \quad \ubar{\delta} \leq G - \nu - \mu_1 \leq 0, \quad \ubar{e} \leq G - \nu - \mu_1 \leq 0.
	\end{align*}
	For any $t \in [0, T]$, suppose the following conditions hold:
	\begin{align}
		&\ubar{L}(t) \geq 0, \quad \ubar{M}(t) \geq \ubar{m}, \quad \bar{M}(t) \leq \bar{m}, \nonumber \\
		& \bar{M}(t) - \ubar{N}(t) - \ubar{\Gamma}(t) \leq 0, \quad \ubar{M}(t) - \bar{N}(t) - \bar{\Gamma}(t) \geq \ubar{\delta}, \label{cond:StateODE} \\
		& 2 \bar{L}(t) - \ubar{H}(t) - \ubar{F}(t) \leq 0, \quad 2\ubar{L}(t) - \bar{H}(t) - \bar{F}(t) \geq \ubar{e}, \nonumber
	\end{align}
	with functions $\ubar{L}(t)$, $\ubar{M}(t)$, $\bar{M}(t)$, $\ubar{N}(t)$, $\bar{N}(t)$, $\ubar{\Gamma}(t)$, $\bar{\Gamma}(t)$, $\ubar{H}(t)$, $\bar{H}(t)$, $\ubar{F}(t)$, and $\bar{F}(t)$ defined in the proof, which are independent of solutions to \eqref{eq:LHF}-\eqref{eq:MNGamma}. Moreover, assume
	\begin{equation}
		R_t + \ubar{m} D'_t D_t \succeq \tau I, \; t \in [0, T]
	\end{equation}
	for some $\tau > 0$.

	Then the ODE system \eqref{eq:LHF}-\eqref{eq:MNGamma} admits a unique non-negative solution satisfying
	\begin{align*}
		&L(t) \geq 0, \quad \ubar{m} \leq M(t) \leq \bar{m}, \quad \ubar{\delta} \leq \Delta(t) \leq 0, \quad \ubar{e} \leq E(t) \leq 0, \quad t \in [0,T].
	\end{align*}
\end{lemma}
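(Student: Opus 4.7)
The plan is to attack the coupled system \eqref{eq:LHF}-\eqref{eq:MNGamma} by a truncation-and-comparison scheme, in the spirit used for Riccati-type equations with non-Lipschitz nonlinearities. The two sources of difficulty are \eqref{eq:h*}, which makes $h^\ast$ depend on $E=2L-H-F$, and \eqref{eq:alpha*}, which makes $\alpha^\ast$ depend on $\Delta=M-N-\Gamma$, on $E$, and on the invertibility of $R_s+(M+\Delta E/\xi_1)D_s'D_s$. Without a priori bounds, these feedback terms can blow up (e.g.\ the cubic term $\Delta^2 E^3 L$ that the authors flag). So the first step is to define truncations $\hat M_s:=\max(\ubar m,\min(M_s,\bar m))$, $\hat\Delta_s:=\max(\ubar\delta,\min(\Delta_s,0))$ and $\hat E_s:=\max(\ubar e,\min(E_s,0))$, and substitute $\hat M,\hat\Delta,\hat E$ into \eqref{eq:h*}--\eqref{eq:alpha*} to obtain truncated coefficients $\hat h^\ast(s)$ and $\hat\alpha^\ast(s)$. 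Under the hypothesis $R_s+\ubar m D_s'D_s\succeq\tau I$, the matrix to be inverted in the truncated version of \eqref{eq:alpha*} remains uniformly positive-definite, so $\hat h^\ast$ and $\hat\alpha^\ast$ are bounded Lipschitz functions of $(\hat L,\hat H,\hat F,\hat M,\hat N,\hat\Gamma)$.

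The second step is then standard: feeding $\hat h^\ast,\hat\alpha^\ast$ back into \eqref{eq:LHF}-\eqref{eq:MNGamma} gives a system whose right-hand side is globally Lipschitz in $(L,H,F,M,N,\Gamma)$ and bounded on bounded sets, so Picard--Lindel\"of yields a unique global backward solution on $[0,T]$.

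The heart of the proof is step three, which removes the truncation. Here I would construct the envelope functions $\ubar L(t),\bar L(t),\ubar M(t),\bar M(t),\ubar N(t),\bar N(t),\ubar\Gamma(t),\bar\Gamma(t),\ubar H(t),\bar H(t),\ubar F(t),\bar F(t)$ promised in the statement as solutions of explicit linear ODEs obtained by replacing the nonlinear couplings in \eqref{eq:LHF}-\eqref{eq:MNGamma} by their worst-case values on the truncation box $[\ubar m,\bar m]\times[\ubar\delta,0]\times[\ubar e,0]$. For instance, $\bar M$ should satisfy the ODE one gets by maximising the right-hand side of the $\dot M$ equation over admissible $\hat h^\ast,\hat\alpha^\ast$, $\ubar M$ by minimising, and similarly for the others; the terminal conditions are inherited from $(G,\nu,\mu_1)$. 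These envelopes are by construction independent of the solution, and the hypothesis \eqref{cond:StateODE} encodes exactly that they are compatible with the required sign/ordering constraints $\bar M-\ubar N-\ubar\Gamma\le 0$, $\ubar M-\bar N-\bar\Gamma\ge\ubar\delta$, $2\bar L-\ubar H-\ubar F\le 0$, $2\ubar L-\bar H-\bar F\ge\ubar e$. With these in hand, a standard ODE comparison argument applied backward from $T$ --- treating $M,H,F,L$ as linear ODEs once the sign of the driving terms is controlled, and using $\ubar M>0$ together with $M_T=G>0$, $N_T=\nu\ge 0$, $L_T=G/2\ge 0$ as a base for a bootstrap --- shows that the solution of the truncated system actually lies in the interior of the truncation box pointwise in $t$.

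The fourth step is then a tautology: once the truncation is inactive, the truncated solution coincides with a solution of the original \eqref{eq:LHF}-\eqref{eq:MNGamma}, and it inherits the bounds $L\ge 0$, $\ubar m\le M\le\bar m$, $\ubar\delta\le\Delta\le 0$, $\ubar e\le E\le 0$. Uniqueness for the original system follows because any solution of the original satisfying these bounds automatically solves the truncated system, for which uniqueness was already established. The main obstacle, as I see it, is step three: choosing the envelope ODEs so that (a) they are themselves well-posed on $[0,T]$, (b) the comparison inequalities close up despite the six-way coupling through $h^\ast$ and $\alpha^\ast$, and (c) the resulting compatibility condition can be packaged into the clean form \eqref{cond:StateODE}. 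Everything else is either Cauchy--Lipschitz or Gronwall.
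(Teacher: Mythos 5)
Your plan is essentially the paper's own argument: truncate $M$, $\Delta = M-N-\Gamma$, and $E = 2L-H-F$ onto the box, use $R_t+\ubar{m}D_t'D_t\succeq\tau I$ (with $\Delta_c E_c\ge 0$) to get solution-independent bounds on the truncated $h^*,\alpha^*$, solve the resulting Lipschitz system, and compare against envelope ODEs built from worst-case coefficients so that \eqref{cond:StateODE} renders the truncation non-binding and yields uniqueness within the stated class. The only cosmetic differences are that the paper also truncates $L$ (as $\max\{L,0\}$) and only needs the solution to remain in the closed, not open, truncation box.
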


Since $h^*$ is deterministic, the solution to $P(s;t)$ in \eqref{eq:P} is given by 
\begin{align*}
	P(s;t) =& \zeta^*(s;t) (G-\nu) e^{\int^T_s (2A_u + 2 C'_uh^*_u + |C_u|^2)du} \\
	& + \zeta^*(s;t) \int^T_s e^{\int^v_s (2A_u + 2 C'_uh^*_u + |C_u|^2)du} (Q_v - \xi_1 |h^*_v|^2) dv.
\end{align*}
Proposition \ref{Cor:StateAdm} shows that $(h^*, u^*)$ is admissible and an equilibrium.
\begin{proposition}\label{Cor:StateAdm}
	Suppose 
	\begin{itemize}
		\item[(1)] assumptions in Lemma \ref{Lem:StateODE} hold;
		\item[(2)] $R_t + D'_t P(t;t) D_t \succeq 0$ with $P(\cdot;t)$ satisfying conditions in Theorem \ref{Thm:uvariate} or Corollary \ref{cor:special}.
	\end{itemize}
	Then $(h^*, u^*)$ in \eqref{eq:h*}-\eqref{eq:alpha*} is admissible and an equilibrium.
\end{proposition}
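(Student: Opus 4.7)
The plan is to verify the hypotheses of Theorems \ref{Thm:hvariate} and \ref{Thm:uvariate} (or Corollary \ref{cor:special}) for the explicit pair $(h^*, u^*)$ defined in \eqref{eq:h*}--\eqref{eq:alpha*}. First, I would invoke Lemma \ref{Lem:StateODE} to obtain a unique solution $(L, H, F, M, N, \Gamma)$ of \eqref{eq:LHF}--\eqref{eq:MNGamma} on $[0,T]$ with $L \geq 0$, $\underline{m} \leq M \leq \bar{m}$, $\underline{\delta} \leq \Delta \leq 0$, and $\underline{e} \leq E \leq 0$. Since $\Delta_t, E_t \leq 0$ we have $\Delta_t E_t / \xi_1 \geq 0$ and thus $M_t + \Delta_t E_t / \xi_1 \geq \underline{m}$. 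Combined with $R_t + \underline{m} D'_t D_t \succeq \tau I$, this yields $R_t + (M_t + \Delta_t E_t / \xi_1) D'_t D_t \succeq \tau I$ uniformly in $t$, so the inverse in \eqref{eq:alpha*} is well defined and bounded. Consequently $\alpha^*$ and $h^*$ are bounded deterministic functions on $[0,T]$, which in particular gives $h^* \in L^\infty_\F(0, T; \R^d)$.

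Next, I would establish admissibility of $u^*$. With $u^*_s = \alpha^*_s X^*_s$ and the standing choice $b = 0$, $\sigma^j = 0$, the state equation \eqref{eq:def-state} becomes a homogeneous linear SDE with bounded deterministic coefficients. Standard linear SDE moment estimates deliver $\E^\p[\sup_{s \in [0,T]} |X^*_s|^p] < \infty$ for every $p \geq 1$, and because $\mathcal{Q}$ consists of measures with $L^\infty$-bounded Radon--Nikod\'ym densities, a Cauchy--Schwarz/change-of-measure argument gives $X^* \in L^4_\F(\Omega; C(0, T; \R^n))$. The boundedness of $\alpha^*$ then gives $u^* \in L^4_\F(0, T; \R^l)$, so $(h^*, u^*)$ meets Definition \ref{def:admi}.

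The next step is to confirm the ansatz \eqref{ans:pzeta}--\eqref{ans:px} in fact yields the unique solutions of the BSDEs \eqref{pzeta} and \eqref{eq:px}. The terminal conditions $L_T = G/2$, $H_T = \nu$, $F_T = \mu_1$ and $M_T = G$, $N_T = \nu$, $\Gamma_T = \mu_1$ reproduce $p^\zeta(T;t)$ and $p^x(T;t)$ exactly. Matching the $ds$-parts of the It\^o expansions \eqref{eq:Ito-pzeta}--\eqref{eq:Ito-px} with the BSDE drifts (the computation preceding the statement) yields precisely the coupled ODE systems \eqref{eq:LHF}--\eqref{eq:MNGamma}, whose solvability is furnished by Lemma \ref{Lem:StateODE}. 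Uniqueness from \cite[Theorem 2.2, Chapter 7]{yz99} then identifies $(p^\zeta, k^\zeta)$ and $(p^x, k^{x})$ with the ansatz expressions.

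Finally, I would verify the equilibrium conditions. By construction $h^*$ in \eqref{eq:h*} satisfies $k^\zeta(t;t) = \Phi(X^*_t, u^*_t, t) h^*_t$, where $\Phi_j = \xi_1 (X^*_t)^2$, so $\Lambda^\zeta(t;t) = \zeta^*(t;t)[k^\zeta(t;t) - \Phi(X^*_t, u^*_t, t) h^*_t] = 0$, and Theorem \ref{Thm:hvariate} gives \eqref{ineq:h*}. Similarly, $\alpha^*$ in \eqref{eq:alpha*} was derived to solve $\Lambda^x(t;t) = 0$. Since the state-dependent setting has $\Xi_{2,j} = 0$, the second-order term reduces to $\Sigma(t;t) = R_t + \sum_{j=1}^d (D^j_t)' P(t;t) D^j_t$, which is nonnegative by assumption (2); depending on whether $G \succeq \nu$ or $C^j = 0$ holds, Theorem \ref{Thm:uvariate} or Corollary \ref{cor:special} then delivers \eqref{ineq:u*}. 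Combining these, $(h^*, u^*)$ is an equilibrium in the sense of Definition \ref{def:equili}. The main obstacle is entirely shouldered by Lemma \ref{Lem:StateODE}: securing the sign and boundedness estimates on $(L, H, F, M, N, \Gamma)$ which keep $\alpha^*$ well defined and bounded; beyond that, the argument is an assembly of the sufficient-condition theorems with the already verified ansatz.
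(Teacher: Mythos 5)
Your proposal is correct and follows essentially the same route as the paper's own (much terser) proof: use Lemma \ref{Lem:StateODE} to get a continuous solution with $M \geq \ubar{m} > 0$ and $\Delta, E \leq 0$, conclude that $(h^*, \alpha^*)$ is essentially bounded so that $u^* = \alpha^* X^* \in L^4_\F(0,T;\R^l)$, and then invoke the sufficient conditions \eqref{eq:h-suff} and \eqref{eq:u-suff} via Theorems \ref{Thm:hvariate} and \ref{Thm:uvariate} (or Corollary \ref{cor:special}). Your additional details (the uniform lower bound on $R_t + (M_t + \Delta_t E_t/\xi_1)D_t'D_t$, identification of the ansatz with the unique BSDE solutions, and the reduction of $\Sigma(t;t)$ to $R_t + D_t'P(t;t)D_t$ since $\Xi_{2,j}=0$) simply make explicit what the paper leaves implicit.
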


For a small time horizon $T$, we provide an example that satisfies the assumptions in Lemma \ref{Lem:StateODE} and Proposition \ref{Cor:StateAdm}. Motivated by the MV portfolio selection problem, we consider the following constant parameters:
\begin{align*}
	G = 1, \quad \nu = 1, \quad D =1, \quad R = 0, \quad C =0, \quad Q = 0, \quad \mu_1 > 0.
\end{align*}
No additional assumptions are made on $A_t$ and $B_t$ beyond the standing Assumption \ref{assum:state}.

For a small $\varepsilon_0$ satisfying $0 < \varepsilon_0 \leq \min\{1/2, \mu_1 \}$, the continuity property of $P(t; t)$ and the functions in \eqref{cond:StateODE}, along with the boundary conditions, imply the existence of a small $T := \delta_0 > 0$, such that for $t \in [0, T]$, we have:
	\begin{align*}
		&\ubar{L}(t) \in [1/2 - \varepsilon_0, 1/2 + \varepsilon_0], \quad \ubar{M}(t) \in [1 - \varepsilon_0, 1 + \varepsilon_0], \quad \bar{M}(t) \in [1 - \varepsilon_0, 1 + \varepsilon_0], \\
		& \bar{M}(t) - \ubar{N}(t) - \ubar{\Gamma}(t) \in [- \mu_1 - \varepsilon_0, - \mu_1 + \varepsilon_0], \\
		& \ubar{M}(t) - \bar{N}(t) - \bar{\Gamma}(t) \in [- \mu_1 - \varepsilon_0, - \mu_1 + \varepsilon_0], \\
		& 2 \bar{L}(t) - \ubar{H}(t) - \ubar{F}(t) \in [- \mu_1 - \varepsilon_0, - \mu_1 + \varepsilon_0], \\
		& 2\ubar{L}(t) - \bar{H}(t) - \bar{F}(t) \in [- \mu_1 - \varepsilon_0, - \mu_1 + \varepsilon_0], \\
		& P(t;t) \in [1 - \varepsilon_0, 1 + \varepsilon_0].
	\end{align*}
Here, $P(\cdot; t)$ is defined with $G$ because $C = 0$, as noted in Corollary \ref{cor:special}.
	
Let $ \ubar{m} = 1 - \varepsilon_0$, $\bar{m} = 1 + \varepsilon_0$, $\ubar{e} = \ubar{\delta} = - \mu_1 - \varepsilon_0$. Under these conditions, \eqref{cond:StateODE} and the other assumptions in Lemma \ref{Lem:StateODE} and Proposition \ref{Cor:StateAdm} are satisfied.

\subsection{Control-dependent ambiguity aversion}
In this section, we consider a control-dependent ambiguity aversion as $\Phi_j(X_s, u_s, s) = \xi_2 l u^2_{js}$. For the well-posedness of the problem, we should consider the equilibrium strategy satisfying $u^2_{js} > 0$, $\as, \, \aee \, t \in [0, T]$. Otherwise, there is no penalty for deviating from the reference measure. This condition can be verified a posteriori with the derived equilibrium $u^*$.

Similar to the state-dependent ambiguity aversion, we still suppose $\mu_2, b, \sigma^j = 0$, as well as $C^j = 0$ for tractability. To ease the burden of presentation, we consider $D_t$ and $R_t$ as identity matrices multiplied by scalar functions which are still denoted as $D_t$ and $R_t$ for simplicity.

We conjecture that $p^\zeta(s;t)$ and $p^x(s;t)$ have the same form as in \eqref{ans:pzeta}-\eqref{ans:px}. Without confusion, we use the same notations for the ansatz. Try the candidate equilibrium $u^*_t = \alpha^*_t X^*_t$. The sufficient condition \eqref{eq:h-suff} for $h^*$ gives
\begin{equation*}
	\xi_2 l (\alpha^*_{jt})^2 h^*_{jt} = E_t D_t \alpha^*_{jt}.
\end{equation*}
Since $\alpha^*_{jt} \neq 0$, then
\begin{equation}\label{eq:ctrl_h*}
	h^*_{jt} = \frac{E_t D_t}{\xi_2 l \alpha^*_{jt}}.
\end{equation}
The sufficient condition for $u^*$ is
\begin{align*}
	& B_t p^x(t;t) + D_t k^x(t;t) + R_t u^*_t - \xi_2 l h^*_t  \odot h^*_t \odot u^*_t = 0.
\end{align*}

After simplification, it reduces to
\begin{equation}\label{eq:alpha_nonzero}
	(D^2_t M_t + R_t) \alpha^*_{jt} + B_{jt} \Delta_t + \frac{D^2_t(\Delta_t E_t - E^2_t)}{\xi_2 l \alpha^*_{jt}} = 0.
\end{equation}
Since $\alpha^*_{jt}$ is non-zero, we multiply $\alpha^*_{jt}$ on both sides and denote the equation as
\begin{equation}\label{eq:quad-alpha}
	\kappa_{t} (\alpha^*_{jt})^2 + \beta_{jt} \alpha^*_{jt} + \gamma_t = 0, 
\end{equation}
with corresponding $\kappa_{t}$, $\beta_{jt}$, and $\gamma_t$. Note that $\Delta_T = E_T$. One root of the quadratic equation \eqref{eq:quad-alpha} always leads to $\alpha^*_{jT} = 0$. Thus this root is redundant. Finally, only one equilibrium is solved from \eqref{eq:quad-alpha}. 

Similarly, one can derive the following ODE system, which is simpler than the previous ODE system \eqref{eq:LHF}--\eqref{eq:MNGamma} for the state-dependent ambiguity aversion.
\begin{eqnarray}\label{eq:CtrlODE}
	\left\{\begin{array}{lcl}
		\dot{L}_s &=& -[2A_s + 2B'_s \alpha^*_s + 2 D^2_s E_s/\xi_2 + D^2_s |\alpha^*_s|^2] L_s - Q_s/2 \\
		& & - R_s |\alpha^*_s|^2/2 + E^2_s D^2_s /(2\xi_2), \\
		\dot{H}_s &=& -[2A_s + 2B'_s \alpha^*_s + 2 D^2_s E_s/\xi_2 ] H_s, \\
		\dot{F}_s &=& -[A_s + B'_s \alpha^*_s + D^2_s E_s/\xi_2 ] F_s, \\
		\dot{M}_s &=& -[2A_s + B'_s \alpha^*_s + D^2_s E_s/\xi_2 ] M_s - Q_s, \\
		\dot{N}_s &=& -[2A_s + B'_s \alpha^*_s + D^2_s E_s/\xi_2 ] N_s, \\
		\dot{\Gamma}_s &=& - A_s \Gamma_s.
	\end{array}\right.
\end{eqnarray}

Note that $\Gamma$ is explicitly solvable and we only need to prove the existence and uniqueness for other five variables.

\begin{lemma}\label{Lem:CtrlODE}	
	Consider constants $\ubar{m}$, $\bar{m}$, $\bar{\delta}$, and $\bar{e}$ satisfying
	\begin{align*}
		& 0 < \ubar{m} \leq G \leq \bar{m}, \quad - \bar{\delta} \leq G - \nu - \mu_1 \leq \bar{\delta}, \quad -\bar{e} \leq G - \nu - \mu_1 \leq \bar{e}.
	\end{align*}
	For any $t \in [0, T]$, suppose the following conditions hold:
	\begin{align}
		&\ubar{L}(t) \geq 0, \quad \ubar{M}(t) \geq \ubar{m}, \quad \bar{M}(t) \leq \bar{m}, \quad \bar{I}(t) - \Gamma(t) \leq \bar{\delta}, \quad \ubar{I}(t) -\Gamma(t) \geq -\bar{\delta}, \label{cond:CtrlODE} \\
		& 2 \bar{L}(t) - \ubar{H}(t) - \ubar{F}(t) \leq \bar{e}, \quad 2\ubar{L}(t) - \bar{H}(t) - \bar{F}(t) \geq -\bar{e}.  \nonumber
	\end{align}
	with functions $\ubar{L}(t)$, $\bar{L}(t)$, $\ubar{M}(t)$, $\bar{M}(t)$, $\ubar{I}(t)$, $\bar{I}(t)$, $\ubar{H}(t)$, $\bar{H}(t)$, $\ubar{F}(t)$, and $\bar{F}(t)$ defined in the proof, which are independent of solutions to \eqref{eq:CtrlODE}. Moreover, assume
	\begin{equation}\label{eq:discriminant}
		\xi_2 l B^2_{jt} - D^2_t (D^2_t \bar{m} + R_t) \geq 0, \; t \in [0, T], \; j = 1, ... , l.
	\end{equation}	
	Then the ODE system \eqref{eq:CtrlODE} admits a unique non-negative solution satisfying
	\begin{align*}
		&L(t) \geq 0, \quad \ubar{m} \leq M(t) \leq \bar{m}, \quad -\bar{\delta} \leq \Delta(t) \leq \bar{\delta}, \quad -\bar{e} \leq E(t) \leq \bar{e}, \quad t \in [0,T].
	\end{align*}	
\end{lemma}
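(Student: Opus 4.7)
The plan is to follow the same truncation blueprint used for Lemma \ref{Lem:StateODE}, simplified by the fact that in \eqref{eq:CtrlODE} each of $H$, $F$, $M$, $N$, $\Gamma$ satisfies a scalar linear ODE in itself and $\Gamma$ decouples entirely. First I would solve $\Gamma_s = \mu_1 \exp\bigl(\int_s^T A_u\,du\bigr)$ in closed form and treat it as a known coefficient. Next I would analyze the algebraic map $(M,\Delta,E) \mapsto \alpha^*$ defined implicitly by \eqref{eq:quad-alpha}. At $s=T$ one has $\Delta_T = E_T = G-\nu-\mu_1$, so $\gamma_T = 0$ and the quadratic degenerates to $\alpha(\kappa\alpha+\beta)=0$; the non-redundant branch is the one reducing to the mean-variance selector $-B_{jT}(G-\nu-\mu_1)/(D_T^2 G + R_T)$ at $T$. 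I would select this branch by a sign convention on the square root and track it by continuity on $[0,T]$.

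The critical algebraic observation is that condition \eqref{eq:discriminant} together with $M\le\bar{m}$ makes the discriminant $\beta_{jt}^2 - 4\kappa_t\gamma_t$ of \eqref{eq:quad-alpha} non-negative throughout the truncation box. Indeed, writing
\begin{equation*}
\beta_{jt}^2 - 4\kappa_t\gamma_t = B_{jt}^2 \Delta_t^2 - \frac{4\kappa_t D_t^2 E_t}{\xi_2 l}\Delta_t + \frac{4\kappa_t D_t^2 E_t^2}{\xi_2 l}
\end{equation*}
and viewing the right side as a quadratic in $\Delta_t$ with positive leading coefficient $B_{jt}^2$, its own discriminant reduces to a positive multiple of $\kappa_t D_t^2/(\xi_2 l) - B_{jt}^2$, which is non-positive under \eqref{eq:discriminant} since $\kappa_t = D_t^2 M_t + R_t \le D_t^2 \bar{m} + R_t$. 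Hence $\alpha^*$ is a locally Lipschitz function of $(M,\Delta,E)$ on the truncation box; moreover the chosen branch satisfies $|\alpha^*_{jt}| \ge |\beta_{jt}|/(2\kappa_t)$ away from $\beta_{jt}=0$, so $h^*_{jt} = E_t D_t/(\xi_2 l\,\alpha^*_{jt})$ stays bounded and the substituted drift in \eqref{eq:CtrlODE} is Lipschitz in the remaining state variables.

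With this algebraic map under control, I would set up a truncated system by clipping $(M,\Delta,E)$ to $[\ubar{m},\bar{m}]\times[-\bar{\delta},\bar{\delta}]\times[-\bar{e},\bar{e}]$ inside the coefficients. Cauchy--Lipschitz applied backwards in time then yields a unique global solution to the truncated system. Propagating the claimed bounds is standard ODE comparison: each of $M, N, H, F$ admits an explicit representation as an exponential-weighted integral of its terminal value plus a non-negative source (e.g., $Q_s$ for $M$), and the conditions \eqref{cond:CtrlODE}, with auxiliary bounding functions $\ubar{L},\bar{L},\ubar{M},\bar{M},\ubar{I},\bar{I},\ubar{H},\bar{H},\ubar{F},\bar{F}$ defined by majorizing/minorizing the drift coefficients on the truncation box, are tailored so that the inequalities on $M$, $\Delta$, $E$, and $2L-H-F$ close up from $T$ backwards via Gronwall. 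Non-negativity of $L$ follows from $L_T = G/2 \ge 0$ and the homogeneous linear structure of $\dot L$ in $L$ once the source $-Q/2 - R|\alpha^*|^2/2 + E^2 D^2/(2\xi_2)$ is controlled by the bound on $E$. Once the bounds are verified, the truncation is inactive, and uniqueness of the original system follows from uniqueness of the truncated one.

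The main obstacle is the interplay between the branch selection and the a priori bounds. One must confirm that the continuous continuation of the non-redundant branch remains bounded and bounded away from $0$ on $[0,T)$, so that $h^*$ is well-defined and the drift of \eqref{eq:CtrlODE} is regular enough for the Lipschitz argument. The non-negativity of the algebraic discriminant rules out branch collisions, and the bounds on $|\Delta|$ and $|E|$ together with \eqref{eq:discriminant} force $|\alpha^*|$ into a compact interval disjoint from zero away from $T$ (where only the redundant root vanishes); these observations close the loop.
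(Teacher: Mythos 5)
Your overall route is the paper's: truncate $(M,\Delta,E)$ to the box $[\ubar{m},\bar{m}]\times[-\bar{\delta},\bar{\delta}]\times[-\bar{e},\bar{e}]$, use \eqref{eq:discriminant} to show the discriminant of \eqref{eq:quad-alpha} is non-negative on that box (your quadratic-in-$\Delta$ argument is equivalent to the paper's completion of squares), bound $\alpha^*$ there, deduce well-posedness of the truncated system, build the comparison functions $\ubar{L},\bar{L},\ubar{M},\bar{M},\ubar{I},\bar{I},\ubar{H},\bar{H},\ubar{F},\bar{F}$ from drift majorants/minorants, and conclude via \eqref{cond:CtrlODE} that the truncation is never binding. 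However, there is one genuine flaw: you make the lower bound $|\alpha^*_{jt}|\ge|\beta_{jt}|/(2\kappa_t)$ and the resulting boundedness of $h^*_{jt}=E_tD_t/(\xi_2 l\,\alpha^*_{jt})$ load-bearing (``so \dots the substituted drift in \eqref{eq:CtrlODE} is Lipschitz'', and later ``\eqref{eq:discriminant} force[s] $|\alpha^*|$ into a compact interval disjoint from zero''). This claim is not provable from the lemma's hypotheses and is false in general: $\beta_{jt}=B_{jt}\Delta_t$ can vanish (indeed $\Delta\equiv 0$ and $\alpha^*\equiv 0$ occur in the degenerate case of Corollary \ref{Cor:Ctrl}), the continuously tracked branch need not be the larger-magnitude root after a root collision, and this is precisely why Proposition \ref{Cor:CtrlAdm} imposes $|\alpha^*_{jt}|\ge\phi>0$ as a \emph{separate assumption} rather than deriving it here.

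Fortunately the lower bound is also unnecessary, which is the structural point you missed: after substituting $h^*_{j}=E D/(\xi_2 l\,\alpha^*_{j})$, every $1/\alpha^*_{j}$ factor cancels, and the right-hand side of \eqref{eq:CtrlODE} depends on the control only through $B'_s\alpha^*_s$, $|\alpha^*_s|^2$ and the explicit terms $D^2_sE_s/\xi_2$, $E^2_sD^2_s/(2\xi_2)$. Hence only an \emph{upper} bound for $\alpha^*$ on the truncation box is needed, and the paper obtains it for either root from $\bigl(\alpha^*_j+\tfrac{\beta_j}{2\kappa}\bigr)^2=\tfrac{\beta_j^2-4\kappa\gamma}{4\kappa^2}$, yielding
\begin{equation*}
|\alpha^*_j|\le \frac{|B_j|\,\bar{\delta}}{D^2\ubar{m}+R}+\frac{|D|\sqrt{\bar{e}^2+\bar{\delta}\bar{e}}}{\sqrt{\xi_2 l\,(D^2\ubar{m}+R)}}=:\bar{\alpha}_j,
\end{equation*}
an explicit constant independent of the solution; these constants (not merely ``boundedness on a compact box'') are what make the comparison functions in \eqref{cond:CtrlODE} well-defined a priori. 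If you remove the bounded-away-from-zero claim, replace it with this explicit upper bound, and run your Gronwall/comparison step with $I:=M-N$ (so that the $\Delta$-bounds close via $\Delta=I-\Gamma$ with $\Gamma$ explicit, as you partly indicate), your argument becomes the paper's proof.
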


Proposition \ref{Cor:CtrlAdm} shows that $(h^*, u^*)$ is admissible and an equilibrium.
\begin{proposition}\label{Cor:CtrlAdm}
	Suppose 
	\begin{itemize}
		\item[(1)] assumptions in Lemma \ref{Lem:CtrlODE} hold;
		\item[(2)] $R_t - \xi_2 l (h^*_{jt})^2 + D'_t P(t;t) D_t \succeq 0, j = 1, ..., l$, with $P(\cdot;t)$ satisfying conditions in Theorem \ref{Thm:uvariate} or Corollary \ref{cor:special};
		\item[(3)] $|\alpha^*_{jt}| \geq \phi > 0$ for $j = 1, ..., l$, $ t \in [0, T]$.
	\end{itemize}
	Then $(h^*, u^*)$ in \eqref{eq:ctrl_h*}-\eqref{eq:quad-alpha} is admissible and an equilibrium.
\end{proposition}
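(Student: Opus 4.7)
The plan is to verify directly that the constructed pair $(h^*, u^*)$ is admissible in the sense of Definition \ref{def:admi} and that it satisfies the sufficient conditions of Theorem \ref{Thm:hvariate} and of Corollary \ref{cor:special} (invoked in place of Theorem \ref{Thm:uvariate} since $C^j = 0$). The ansatz \eqref{ans:pzeta}--\eqref{ans:px} together with the ODE system \eqref{eq:CtrlODE} has been engineered precisely so that these conditions hold. Lemma \ref{Lem:CtrlODE} supplies the existence, uniqueness, and a priori bounds on $(L,H,F,M,N,\Gamma)$, hence on $\Delta = M-N-\Gamma$ and $E = 2L-H-F$; the proof therefore becomes a matter of threading these ingredients together while controlling $\alpha^*$.

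First I would handle admissibility. Since $h^*$ is deterministic by \eqref{eq:ctrl_h*}, admissibility of $h^*$ reduces to an essential boundedness estimate on $E_t D_t / (\xi_2 l \alpha^*_{jt})$. Lemma \ref{Lem:CtrlODE} gives $|E_t| \leq \bar{e}$, Standing Assumption \ref{assum:state} provides essential boundedness of $D_t$, and hypothesis (3) yields $|\alpha^*_{jt}| \geq \phi > 0$, so $h^* \in L^\infty(0,T;\R^d) \subseteq L^\infty_\F(0,T;\R^d)$. For $u^*$, I first obtain a uniform bound on $\alpha^*$. Solving the quadratic \eqref{eq:quad-alpha}, the leading coefficient $\kappa_t = D_t^2 M_t + R_t$ is bounded below thanks to $M_t \geq \ubar{m} > 0$ together with $R \succeq 0$, while $\beta_{jt}$ and $\gamma_t$ are bounded by the bounds on $\Delta$, $E$, $D$, and $B$; nonnegativity of the discriminant is supplied by hypothesis \eqref{eq:discriminant} of Lemma \ref{Lem:CtrlODE}. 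The branch singled out by $|\alpha^*_{jt}| \geq \phi$ is measurable and essentially bounded in $t$ by continuity of the coefficients away from any crossing point of the two roots. With $\alpha^* \in L^\infty(0,T;\R^l)$, the linear SDE for $X^*$ has essentially bounded coefficients, so $X^* \in L^4_\F(\Omega; C(0,T;\R^n))$ by \cite[Appendix D]{fleming2006controlled}, yielding $u^* = \alpha^* X^* \in L^4_\F(0,T;\R^l)$.

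Next, I would confirm that the ansatz \eqref{ans:pzeta}--\eqref{ans:px}, together with the ODEs \eqref{eq:CtrlODE}, really solves the adjoint BSDEs \eqref{pzeta} and \eqref{eq:px}. This amounts to repeating the It\^o computation performed for the state-dependent case: reading $k^\zeta$ and $k^{x,j}$ off the diffusion parts and then matching the drift terms against the BSDE drivers produces precisely \eqref{eq:CtrlODE}, whose unique solution on $[0,T]$ is provided by Lemma \ref{Lem:CtrlODE}. The terminal conditions $L_T = G/2$, $H_T = \nu$, $F_T = \mu_1$, $M_T = G$, $N_T = \nu$, $\Gamma_T = \mu_1$ match the boundary data of \eqref{pzeta} and \eqref{eq:px}. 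Because $h^*$ is deterministic and $C^j = 0$, the second-order adjoint $P(s;t)$ can be written in closed form, and its value at $s=t$ feeds directly into hypothesis (2).

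It then remains to verify the two sufficient conditions. After substitution of the ansatz and cancellation of $\zeta^*(t;t)=1$, the condition $\Lambda^\zeta(t;t) = 0$ of Theorem \ref{Thm:hvariate} is exactly the defining identity \eqref{eq:ctrl_h*} for $h^*_{jt}$, while $\Lambda^x(t;t) = 0$ of Corollary \ref{cor:special} collapses to \eqref{eq:alpha_nonzero}, equivalently \eqref{eq:quad-alpha}, which is the equation defining $\alpha^*$; hypothesis (3) selects the non-spurious root, discarding the branch that degenerates to $\alpha^*_{jT}=0$. The second-order inequality $\Sigma(t;t) \succeq 0$ is then exactly hypothesis (2), observing that in the scalar-times-identity setting $\sum_j (D_s^j)' P(s;t) D_s^j$ contributes $D_t^2 P(t;t)$ on each diagonal of the resulting $l \times l$ matrix. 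The main technical obstacle I expect is the admissibility step for $u^*$: one must be sure that the chosen branch of \eqref{eq:quad-alpha} stays globally defined, measurable, and essentially bounded on $[0,T]$. The lower bound $\phi$ together with continuity of the ODE coefficients prevents jumps between branches, but this is where the argument is most delicate.
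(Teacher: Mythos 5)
Your proposal is correct and follows essentially the same route as the paper: the paper's proof likewise reduces the claim to verifying admissibility (boundedness of $(h^*,\alpha^*)$ via Lemma \ref{Lem:CtrlODE}, the lower bound $M \geq \ubar{m} > 0$, and hypothesis (3), giving $u^* \in L^4_\F(0,T;\R^l)$) and then invoking the sufficient conditions \eqref{eq:h-suff} and \eqref{eq:u-suff}, which hold by the construction of $h^*$ and $\alpha^*$ together with hypothesis (2). Your additional details (the It\^o matching and the root-selection discussion) simply make explicit what the paper leaves implicit.
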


We now provide an example that satisfies the assumptions in Lemma \ref{Lem:CtrlODE} and Proposition \ref{Cor:CtrlAdm}. Consider the constant parameters given by
\begin{align*}
	G = 1, \quad \nu = 1, \quad D =1, \quad R = 0, \quad C =0, \quad Q = 0, \quad \mu_1 > 0.
\end{align*}
Additionally, assume that $A_t > 0$ and $B_t \equiv B$, with $l\xi_2 B^2 > 1$. 
	
Recall that $\Gamma_t = \mu_1 e^{\int^T_t A_s ds}$. Under the current specification, we have $\ubar{I}(t) = \bar{I}(t) = 0$. For a small $\varepsilon_0$ such that $0 < \varepsilon_0 \leq \min\{1/2, 1 - 1/(l\xi_2 B^2), \mu_1 |B|/2, l\xi_2 B^2 - 1 \}$, the continuity property of $P(t; t)$ and the functions in \eqref{cond:CtrlODE}, along with the boundary conditions, imply that there exists a small $T := \delta_0 > 0$, such that for $t \in [0, T]$, the following conditions hold:
	\begin{align*}
		&\ubar{L}(t) \in [1/2 - \varepsilon_0, 1/2 + \varepsilon_0], \quad \ubar{M}(t) \in [1 - \varepsilon_0, 1 + \varepsilon_0], \quad \bar{M}(t) \in [1 - \varepsilon_0, 1 + \varepsilon_0], \\
		& 2 \bar{L}(t) - \ubar{H}(t) - \ubar{F}(t) \in [- \mu_1 - \varepsilon_0, - \mu_1 + \varepsilon_0], \\
		& 2\ubar{L}(t) - \bar{H}(t) - \bar{F}(t) \in [- \mu_1 - \varepsilon_0, - \mu_1 + \varepsilon_0], \\
		& P(t;t) \in [1 - \varepsilon_0, 1 + \varepsilon_0].
	\end{align*}
Here, by Corollary \ref{cor:special}, $P(\cdot;t)$ is also defined with $G$ since $C=0$.
	
Let $ \ubar{m} = 1 - \varepsilon_0$, $\bar{m} = 1 + \varepsilon_0$, $ \bar{\delta} \geq \mu_1 e^{\int^T_0 A_s ds}$, and $ \bar{e} = \mu_1 + \varepsilon_0$. Then conditions \eqref{cond:CtrlODE} and \eqref{eq:discriminant} in Lemma \ref{Lem:CtrlODE} are valid. Conditions (2) and (3) in Proposition \ref{Cor:CtrlAdm} hold because $h^*_{jt}$ and $\alpha^*_{jt}$ are continuous and $h^*_{jT} = -1/(\xi_2 l B)$, $\alpha^*_{jT} = \mu_1 B$.

\section{Numerical Studies}\label{sec:num}
A relevant application of the general LQ control theory is the MV portfolio selection problem \cite{lz02,bc10,hjz12,bkm17,hpw21FS}. Consider a complete market with one risk-free asset, with a risk-free rate $A_t > 0$, and $d$ risky assets. The price vector $S$ of the risky assets is driven by the following dynamics:
	\begin{equation}\label{bsstate}
		dS_t = \hbox{diag}(S_t) \big( m_t dt + \sigma_t dW^\p_t \big),
	\end{equation}
	where $\hbox{diag}(S_t)$ is the diagonal matrix with the elements of $S_t$ and $m_t$ and $\sigma_t$ represent the continuous return vector and volatility matrix of the risky assets, respectively. We assume that $\sigma_t$ is invertible.
	
	Let $X_s$ denote wealth and $\pi_s X_s$ represent the amount invested in the $d$ risky assets at time $s$. By the self-financing principle, the wealth process $X$ evolves according to:
	\begin{equation*} 
		dX_s = \big( A_s X_s+(m_s-A_s\id)'\pi_s X_s \big)ds + X_s\pi_s'\sigma_s dW^\p_s=(A_sX_s + B'_s \alpha_s X_s)ds + X_s \alpha_s'dW^\p_s,
	\end{equation*}
	where $\id := (1,\ldots,1)'\in\R^d$, $\alpha_s = \sigma_s'\pi_s$ is the transformed control vector, and $B_s = \sigma_s^{-1}(m_s-A_s\id)$ is the Sharpe ratio vector.
	
Under the alternative measure corresponding to $h^*$, the wealth process $X^*$ follows:
\begin{equation}
	dX^*_s = [A_sX^*_s+ (B_s + h^*_s)'\alpha^*_s X^*_s]ds + X^*_s (\alpha^*_s)' dW^*_s,
\end{equation}
where $W^*$ is the $d$-dimensional standard Brownian motion under the measure corresponding to $h^*$. 

The objective function of MV investors with ambiguity aversion is
\begin{equation*}
	\frac{1}{2}\left[\E^*_t [(X^*_T)^2]  - (\E^*_t [X^*_T])^2\right] - \mu_1 x_t \E^*_t [X^*_T] -  \frac{1}{2} \E^*_t \left[ \int_t^T (h^*_s)'\Phi(X^*_s, u^*_s, s)h^*_s ds \right],
\end{equation*}
where $\E^*_t[\cdot]$ is the conditional expectation under the measure corresponding to $h^*$. The state-dependent (resp. control-dependent) ambiguity aversion preference $\Phi(X^*_s, u^*_s, s)$ are chosen as $\xi_1 (X^*_s)^2$ (resp. $\xi_2 l (u^*_{js})^2$). It is noteworthy that both choices are aligned with the dimensional analysis as in \cite[Section 4.1]{bmz14} and \cite{hpw21FS}. Specifically, both the variance operator and the state-dependent expectation have a dimension of $(\text{dollar})^2$, and thus a reasonable ambiguity aversion function should agree with the same dimension.

In our numerical studies, we mainly investigate the impact of ambiguity aversion and risk aversion on the expected return and risk (variance). Note that $A, B, \alpha^*, h^*$ are deterministic. A direct calculation gives 
\begin{align*}
	\E^*[X^*_T] & = x_0 e^{\int^T_0 (A_s + (B_s + h^*_s)'\alpha^*_s) ds}, \\
	\text{Var}^* [X^*_T] &= x^2_0 e^{\int^T_0 2 (A_s + (B_s + h^*_s)'\alpha^*_s) ds} \left( e^{\int^T_0 |\alpha^*_s|^2 ds} -  1 \right),
\end{align*}
where $x_0$ is the initial wealth.

In the following subsections, we consider a benchmark setting with a risk-free rate $A = 0.02$ and a risky asset with risk premium $B = 0.375$ as in \cite{m04}. Initial wealth $x_0 =1$ and investment horizon $T = 1.0$. We focus on the closed-loop state-dependent ambiguity aversion in \cite{hpw21FS}, open-loop state-dependent ambiguity aversion, and open-loop control-dependent ambiguity aversion (CDAA). Without confusion, $\xi$ is interpreted as the ambiguity aversion parameter $\xi_1$ or $\xi_2$ in this paper. Note that \cite{hpw21FS} used $1/\xi$ as the aversion parameter and we have converted it to our formulation. It is difficult to obtain explicit solutions for the complicated ODE system. However, the following asymptotic analysis shows a complex interaction between risk aversion and ambiguity aversion. Crucially, ambiguity aversion demonstrates a stronger impact on investment decisions than the expected utility optimization setting \cite{m04}. Our code is available at \url{https://github.com/hanbingyan/DiffGame}.

\subsection{Varying Risk Aversion}\label{sec:risk}
First, we vary the risk aversion only and fix the ambiguity aversion at a given level of $\xi=10$. Figure \ref{fig:mu} depicts the evolution of the mean and standard deviation pairs of the equilibrium wealth process. To understand the loops in the figure, we note the following:
\begin{enumerate}[leftmargin=2em]
	\item The case $\mu_1 = 0$ is the most risk-averse. An equilibrium control pair is $h^* = 0$ and $u^* = 0$ for all the three cases. Investors who are extremely risk averse only invest in the risk-free asset. The risk premium under the worst-case scenario in equilibrium is $B + h^* = B$.
	\item The case $\mu_1 \rightarrow \infty$ corresponds to the absence of risk aversion. Under state-dependent ambiguity aversion, an equilibrium control pair for both open-loop and closed-loop settings is given by $h^* = -B$ and $u^* = 0$. 
	
	We compare this with the case where $\mu_1 \rightarrow \infty$ but ambiguity aversion is absent. To present more general results, we consider a continuous, deterministic risk-free rate $A_t$ and risk premium $B_t$. In the absence of ambiguity aversion, \cite{hjz12} derives the open-loop equilibrium strategy as
		\begin{equation}\label{eq:open_norob}
			\alpha^*_t = \frac{\mu_1 e^{-\int^T_t A_vdv}}{1 + \mu_1 \int^T_t e^{-\int^T_s A_vdv} |B_s|^2 ds} B_t. 
		\end{equation}
		\cite{bmz14} shows that the closed-loop equilibrium strategy is
		\begin{equation}\label{eq:closed_norob}
			\alpha^*_t = - B_t + B_t e^{ -\int^T_t |\alpha^*_s|^2 ds}  + \mu_1 B_t e^{\int^T_t (- A_s - B'_s \alpha^*_s - |\alpha^*_s|^2 )ds}. 
		\end{equation}
		
		Assuming $B_t > 0$ for all $t \in [0, T]$, both expressions yield $\alpha^*_T = \mu_1 B_T \rightarrow \infty$. For $t \in [0, T)$, the open-loop strategy in \eqref{eq:open_norob} converges to a finite, nonzero value. For the closed-loop strategy in \eqref{eq:closed_norob}, the limiting behavior is less transparent analytically but numerical evidence suggests that it also converges to a finite, nonzero value for large $\mu_1$.
		
		Furthermore, in both cases, we observe the divergence of the expected terminal wealth without ambiguity aversion and its variance:
		\begin{equation}\label{eq:diverge}
			\E^\p[X^*_T] \rightarrow \infty, \quad \text{Var}^\p [X^*_T] \rightarrow \infty, \quad \text{ when } \mu_1 \rightarrow \infty,
		\end{equation}
		where the expectations are taken under the model without ambiguity aversion. The claim in \eqref{eq:diverge} is provable for the open-loop case and supported numerically for the closed-loop case.
		
		In contrast, as long as the ambiguity aversion coefficient is positive and finite, the investor invests zero in the risky asset for any $t \in [0, T]$ when $\mu_1 \rightarrow \infty$. Ambiguity aversion also makes the risk premium under the worst case zero: $B + h^* = 0$.
		
		In the open-loop CDAA setting, the equilibrium obtained from \eqref{eq:alpha_nonzero} results in $\alpha^*_{T} = \mu_1 B_T$, which is the same terminal value as in \eqref{eq:open_norob} and \eqref{eq:closed_norob}.

		\item Since $\mu_1 = 0$ and $\mu_1 \rightarrow \infty$ lead to the same investment decision under state-dependent ambiguity aversion, the standard deviation and mean curves generate circles under both open-loop and closed-loop settings. Then there exist certain risk-aversion levels achieving the highest mean or standard deviation.   
	\end{enumerate}
	The limiting behaviors of the state-dependent case can be formally proved as follows.
	\begin{corollary}\label{Cor:limit}
		With a given finite $\xi_1 > 0$, both closed-loop and open-loop state-dependent ambiguity aversion have an equilibrium pair given by
		\begin{itemize}[label=\raisebox{0.25ex}{\tiny$\bullet$}]
			\item $h^* = 0$ and $u^* = 0$, when $\mu_1 = 0$;
			\item $h^* = - B$ and $u^* = 0$, when $\mu_1 \rightarrow \infty$.
		\end{itemize} 
	\end{corollary}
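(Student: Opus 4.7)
The plan is to handle the two regimes separately by specializing the ODE system \eqref{eq:LHF}--\eqref{eq:MNGamma} to the mean-variance parameters $G=\nu=1$, $D=1$, $Q=R=C=0$ with constant $A,B$. The open-loop claim follows by direct verification when $\mu_1=0$ and by a scaling-based asymptotic analysis as $\mu_1\to\infty$; the closed-loop statement follows from the same reasoning applied to the corresponding ODE system of \cite{hpw21FS}, whose leading-order structure coincides with the open-loop one under this parameter choice.

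For $\mu_1=0$, I would first observe that $F_T=\Gamma_T=0$ together with the linearity of the $F$- and $\Gamma$-equations force $F\equiv\Gamma\equiv 0$ by uniqueness. Testing the candidate $h^*\equiv 0$ and $\alpha^*\equiv 0$, the remaining equations for $L,H,M,N$ reduce to $\dot X=-2AX$ with terminal values $1/2,1,1,1$ respectively, giving $L=H/2$ and $M=N$. Hence $E\equiv 0$ and $\Delta\equiv 0$, so the equilibrium relations \eqref{eq:h*}--\eqref{eq:alpha*} are satisfied trivially, and admissibility follows from Proposition \ref{Cor:StateAdm} (with Corollary \ref{cor:special} invoked because $C=0$).

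For $\mu_1\to\infty$, $\Gamma_s=\mu_1 e^{A(T-s)}$ is explicit. Introducing the scaled variable $\tilde F_s=F_s/\mu_1$, I would postulate the ansatz $\alpha^*_s=O(1/\mu_1)$ uniformly in $s\in[0,T]$. Under this ansatz, the nonlinear cross-terms $B\alpha^*$, $\alpha^* h^*$, and $(\alpha^*)^2$ in the $L,H,M,N$ equations are $O(1/\mu_1)$ or smaller, so these variables converge to bounded limits $\bar L,\bar H,\bar M,\bar N$, while $\tilde F$ satisfies $\dot{\tilde F}=-A\tilde F$, $\tilde F_T=1$, giving $\tilde F_s=e^{A(T-s)}$ and thus $F_s\sim\Gamma_s$. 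Consequently $E_s\sim\Delta_s\sim -\mu_1 e^{A(T-s)}$, so in the denominator of \eqref{eq:alpha*} the term $\Delta_s E_s/\xi_1$ dominates $M_s$, yielding $\alpha^*_s\sim -\xi_1 B/E_s\sim \xi_1 B e^{-A(T-s)}/\mu_1\to 0$, which closes the ansatz. Substituting back into \eqref{eq:h*} then gives $h^*_s=E_s\alpha^*_s/\xi_1\to -B$.

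The main obstacle is turning this formal expansion into a rigorous limit. To justify passage to the limit, I would establish $\mu_1$-uniform bounds on $L,H,M,N,\tilde F$ and on $\mu_1\alpha^*$ by extending the truncation-based positivity estimates of Lemma \ref{Lem:StateODE} to the large-$\mu_1$ regime and applying Gronwall to each rescaled equation. Uniform Lipschitz control of the right-hand sides then delivers equicontinuity, and an Arzel\`a--Ascoli argument extracts a convergent subsequence whose limit must solve the decoupled limit system identified above; uniqueness of this limit system promotes subsequential convergence to full convergence. The closed-loop case is handled identically by replacing \eqref{eq:LHF}--\eqref{eq:MNGamma} with the analogous ODEs of \cite{hpw21FS}.
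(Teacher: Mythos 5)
Your proposal is correct and follows essentially the same route as the paper: for $\mu_1=0$ a direct verification that the ODE system decouples to $\dot X=-2AX$ so that $F=\Gamma=0$, $2L=H$, $M=N$, hence $\Delta=E=0$ and $h^*=\alpha^*=0$; and for $\mu_1\to\infty$ the same leading-order observation that $F,\Gamma\sim\mu_1$ while $L,H,M,N$ stay bounded, so $\Delta E/\xi_1$ dominates the denominator of \eqref{eq:alpha*}, giving $\alpha^*\to 0$ and $h^*=E\alpha^*/\xi_1\to -B$, with the closed-loop case deferred to the analogous ODEs of \cite{hpw21FS}. The only difference is presentational: the paper freezes the conjectured limit pair $(h^*,u^*)=(-B,0)$ in the ODEs and checks consistency, whereas you keep the coupled system, close an $\alpha^*=O(1/\mu_1)$ ansatz, and sketch a Gronwall/Arzel\`a--Ascoli justification of the limit, which is somewhat more than the paper's brief verification but not a different method.
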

	
	Similarly, the control-dependent case for $\mu_1 = 0$ can be proven, while the case with $\mu_1 \rightarrow \infty$ is studied only through numerical methods. 
	\begin{corollary}\label{Cor:Ctrl}
		Suppose $\xi_1 > 0$ and $\mu_1 = 0$. In the open-loop control-dependent ambiguity aversion case, an equilibrium pair is given by $h^* = 0$ and $u^* = 0$.
	\end{corollary}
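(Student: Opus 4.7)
The plan is to verify the sufficient conditions \eqref{eq:h-suff} and \eqref{eq:u-suff} directly with the candidate $(h^*, u^*) \equiv (0, 0)$, in the MV specification of Section \ref{sec:num}, where $G = \nu = 1$, $Q = R = 0$, $C = 0$, $\mu_2 = 0$, $b = \sigma = 0$, and $\Phi_j(X, u, s) = \xi_2 l u_j^2$ (taking $\xi_1$ in the statement to mean the control-dependent aversion coefficient $\xi_2$). Admissibility of these constant controls follows trivially from Definition \ref{def:admi}.

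First I would compute the state and adjoint processes explicitly. With $h^* \equiv 0$ the density $\zeta^*(s;t) \equiv 1$, and with $u^* \equiv 0$ the state \eqref{eq:aug-state} reduces to $dX^*_s = A_s X^*_s\, ds$, so $X^*_s = x_t\exp(\int_t^s A_r\, dr)$ is $\cF_t$-measurable. Consequently the terminals $p^\zeta(T;t) = \tfrac{1}{2}G(X^*_T)^2 - \nu(X^*_T)^2 = -\tfrac{1}{2}(X^*_T)^2$ and $p^x(T;t) = (G-\nu)X^*_T = 0$ are both $\cF_t$-measurable, and the drivers of \eqref{pzeta} and \eqref{eq:px} involve only data that is $\cF_t$-measurable once $X^*$ is fixed: the driver of $p^\zeta$ equals $f = \tfrac{1}{2}Q_s (X^*_s)^2 = 0$, and the driver of $p^x$ equals $A_s p^x + \zeta^*(s;t) Q_s X^*_s = A_s p^x$. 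By uniqueness of BSDE solutions (Theorem~2.2, Chapter 7 of \cite{yz99}), one reads off $k^\zeta \equiv 0$, $k^{x,j} \equiv 0$, and $p^x \equiv 0$.

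Next I would substitute into the sufficient conditions. For \eqref{eq:h-suff}, the control-dependent penalty vanishes at $u^* = 0$, giving $\Phi(X^*_t, 0, t) = 0$, hence
\[
\Lambda^\zeta(t;t) = \zeta^*(t;t)\, k^\zeta(t;t) - \zeta^*(t;t)\,\Phi(X^*_t, 0, t)\, h^*_t = 0.
\]
For \eqref{eq:u-suff}, each term in
\[
\Lambda^x(t;t) = B_t p^x(t;t) + \sum_{j=1}^d (D_t^j)' k^{x,j}(t;t) + \zeta^*(t;t) R_t u^*_t - \zeta^*(t;t)\sum_{j=1}^d (h^*_{jt})^2 \Xi_{2,j} u^*_t
\]
vanishes by the previous step together with $R = 0$ and $u^* = 0$. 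For semidefiniteness of $\Sigma(t;t)$, since $C^j = 0$ I invoke Corollary \ref{cor:special} and redefine $P(\cdot;t)$ with terminal $G = 1$. Because $\Phi_{xx} = 0$ (the control-dependent $\Phi$ does not depend on $X$) and $Q = 0$, the BSDE \eqref{eq:P} reduces to $dP(s;t) = -2 A_s P(s;t)\, ds + \sum_j K^j(s;t)\, dW^\p_{js}$ with $P(T;t) = 1$, so $P(s;t) = \exp(2\int_s^T A_r\, dr) > 0$ and $\Sigma(t;t) = \sum_j (D_t^j)' P(t;t) D_t^j \succeq 0$.

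I expect the only subtle step to be the BSDE reduction in the first paragraph, namely arguing cleanly that the martingale integrands vanish because the terminals and drivers are $\cF_t$-measurable along the candidate trajectory. Once this reduction is in place, Theorem \ref{Thm:hvariate} and Corollary \ref{cor:special} yield the equilibrium property through the above term-by-term substitutions, completing the proof.
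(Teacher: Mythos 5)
Your proof is correct, but it takes a different route from the paper's. The paper argues as in Corollary \ref{Cor:limit}: it conjectures $h^*=0$, $u^*=0$ together with the coefficient relations $2L=H$, $M=N$, $F=\Gamma=0$, $E=\Delta=0$ in the ansatz \eqref{ans:pzeta}--\eqref{ans:px}, observes that the ODE system \eqref{eq:CtrlODE} then collapses to the linear system \eqref{eq:Ctrl_simple} whose solution is consistent with the conjecture, and concludes from the (degenerate) sufficient conditions; you instead bypass the ansatz entirely and solve the adjoint BSDEs \eqref{pzeta}, \eqref{eq:px}, \eqref{eq:P} explicitly along the candidate trajectory, where $\zeta^*\equiv 1$ and $X^*$ is deterministic, so that $k^\zeta\equiv 0$, $p^x\equiv k^{x,j}\equiv 0$ by BSDE uniqueness, and then check \eqref{eq:h-suff} and \eqref{eq:u-suff} term by term, using Corollary \ref{cor:special} (legitimate since $C=0$) for $\Sigma(t;t)\succeq 0$. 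Your route buys two things the paper's sketch leaves implicit: it handles the second-order condition explicitly, and it avoids both the division by $\alpha^*_{jt}$ in \eqref{eq:ctrl_h*} and the hypotheses of Lemma \ref{Lem:CtrlODE}/Proposition \ref{Cor:CtrlAdm} (whose condition $|\alpha^*_{jt}|\geq\phi>0$ fails here), which is precisely why the paper flags this corollary as a degenerate case; the paper's route, in exchange, keeps the statement tied to the general ODE machinery and shows the conjectured coefficients agree with the solution of \eqref{eq:CtrlODE}, which is what the numerical section relies on. Your reading of ``$\xi_1>0$'' as the control-dependent coefficient $\xi_2$ matches the intent of the statement, and the admissibility check is trivial as you note, so no gap remains.
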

	It is important to note that the CDAA framework assumes $\alpha^*_t \neq 0$ in order to impose penalties on $h^*$. Thus, Corollary \ref{Cor:Ctrl} should be interpreted as a degenerate case.
	
	Note that the MV investment proportion $\alpha^*$ is given by \eqref{eq:mv-alpha*} for the state-dependent case:
	\begin{equation}\label{eq:mv-alpha*}
		\alpha^*_t = - [M_t + \Delta_t E_t/\xi_1]^{-1}  B_t \Delta_t.
	\end{equation}
	A closer look at $\alpha^*$ in \eqref{eq:mv-alpha*} gives some economic insights into the effects of ambiguity aversion. From the boundary condition $M_T = G$, one can interpret that $M_t$ comes from the variance operator and it usually reduces the investment amount $\alpha^*_t$. The dominant term of $\Delta_t$ and $E_t$ is $-\mu_1$ when $\mu_1$ is large enough. Therefore, when the investor becomes less risk averse, i.e. $\mu_1$ is larger, $-\Delta_t$ in the numerator enlarges the investment amount. However, ambiguity aversion represented by a positive $\xi_1 > 0$, appears in the denominator and shrinks the investment in a quadratic speed of risk aversion $\mu_1$ from $\Delta_t E_t$. Therefore, ambiguity aversion reduces the stock position at a faster rate than the increment from the numerator. This new behavior reflects that ambiguity aversion alters the {\it effective} risk aversion faster than a linear growth rate, which results in the zero investment strategy when $\mu_1 \rightarrow \infty$. The impact of ambiguity aversion is significant, as an ambiguity-averse investor with low risk aversion may allocate a similar proportion of their portfolio to stocks as an ambiguity-neutral investor with high risk aversion. In practice, investors often invest less aggressively in stocks than classical portfolio theory suggests. While \cite{m04} attributes this behavior to either excessive ambiguity aversion or excessive risk aversion, our work demonstrates that rational investors may adopt a conservative investment strategy even with moderate levels of both ambiguity and risk aversion.
	
	The state-dependent ambiguity aversion could be conservative, especially for the less risk-averse investors. When $\mu_1 \rightarrow \infty$, one may want to avoid the zero-investment strategy and recover the non-robust limiting behavior even when ambiguity exists. Then our CDAA specification becomes a potential candidate since it behaves closer to the non-robust case. It offers another perspective on which ambiguity aversion preference investors should choose.   
	
	\subsection{Varying Ambiguity Aversion}
	
	\begin{figure}
		\centering
		\begin{minipage}{.5\textwidth}
			\centering
			\includegraphics[width=.9\linewidth]{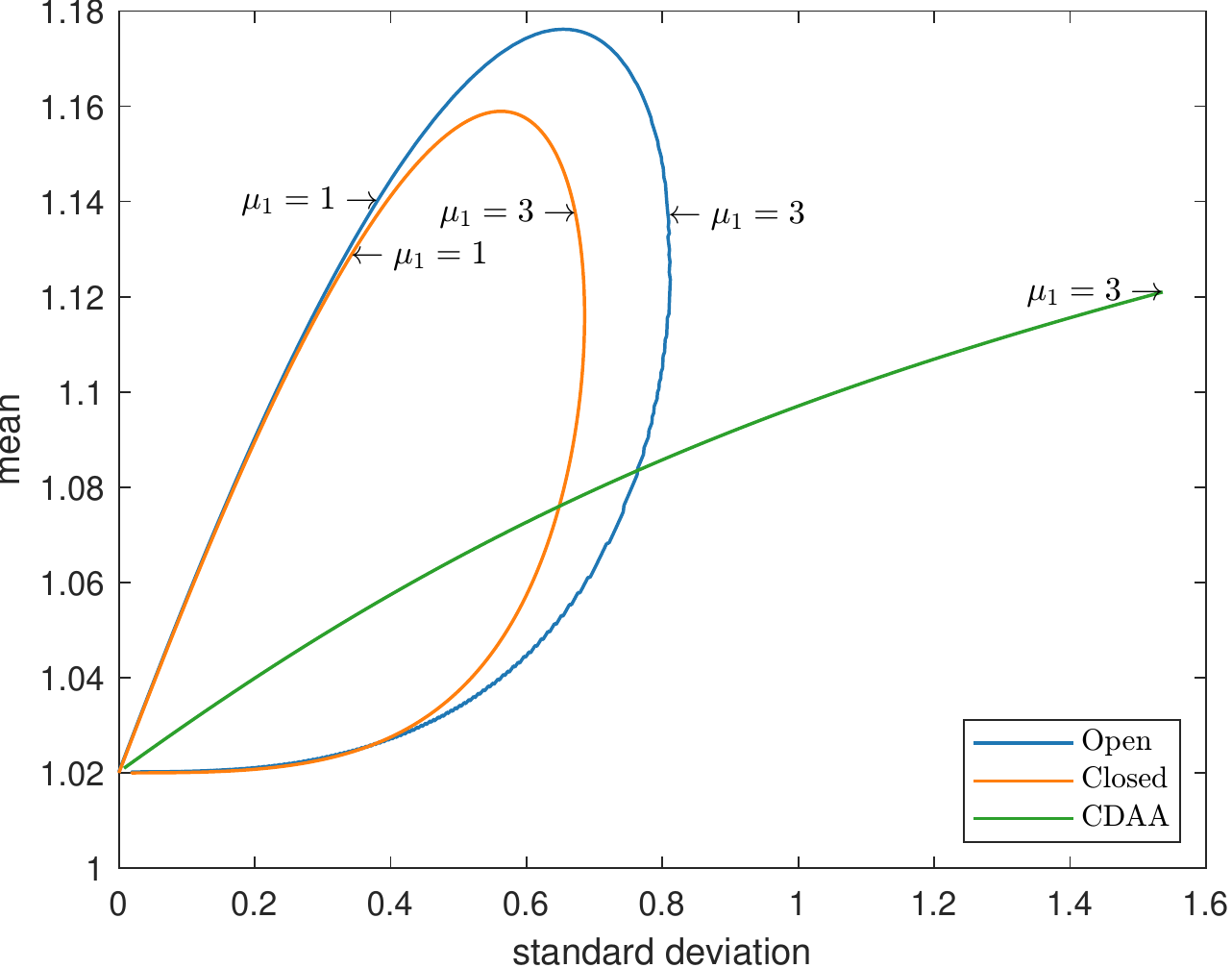}
			\captionof{figure}{Varying risk aversion}
			\label{fig:mu}
		\end{minipage}%
		\begin{minipage}{.5\textwidth}
			\centering
			\includegraphics[width=.9\linewidth]{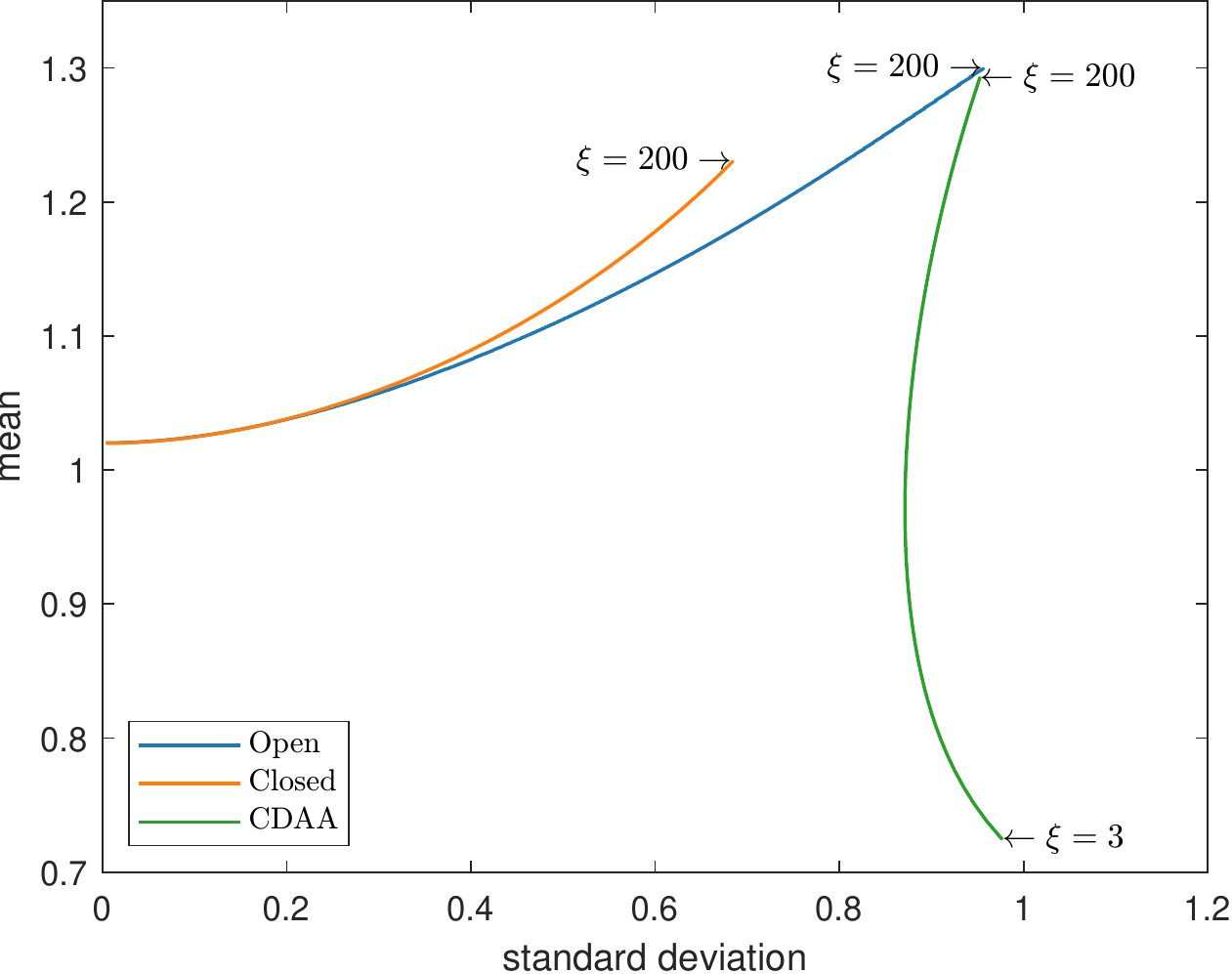}
			\captionof{figure}{Varying ambiguity aversion}
			\label{fig:xi}
		\end{minipage}
	\end{figure}

	If we fix the risk aversion $\mu_1 = 2$ and vary the ambiguity aversion, then Figure \ref{fig:xi} shows that
	\begin{enumerate}
		\item $\xi = 0$ is the most ambiguity-averse case. An equilibrium control pair is $h^* = -B$ and $u^* = 0$ for the state-dependent specification. The proof is very similar to that of Corollary \ref{Cor:limit} and thus omitted. The most ambiguity-averse investor believes the risk premium under the worst-case scenario is zero and thus does not invest in the risky asset. For the CDAA specification, the $h^*$ in \eqref{eq:ctrl_h*} has excluded the case with $u^*=0$. When $\xi \rightarrow 0$, the solution is numerically unstable as $h^*$ in \eqref{eq:ctrl_h*} is dividing $\xi$. Notably, the anticipated returns are even lower than the risk-free rate under the CDAA specification with a high ambiguity aversion.
		\item $\xi \rightarrow \infty$ is the case without ambiguity aversion. The state-dependent cases reduce to the solutions in \cite{bmz14,hjz12}. Since the CDAA formulation is open-loop, it converges to the same open-loop solution in \cite{hjz12}. The proof follows straightforwardly from the ODE in \eqref{eq:CtrlODE} as $\xi \rightarrow \infty$.
	\end{enumerate}
	
	\cite{m04} found the investment decision with power utility and ambiguity aversion is observationally equivalent to the non-robust case with a higher risk aversion. Loosely speaking, \cite{m04} showed the robust strategy with risk aversion $\mu_1$ and ambiguity aversion $\xi$ is the same as the non-robust one with risk aversion $\mu_1 + \xi$. Our results in these two subsections indicate that the effect of ambiguity aversion is not merely additive to risk aversion. The investor with the same $\mu_1 + \xi$ can still have different equilibrium strategies for different pairs of $(\mu_1, \xi)$.
	
	On a side note, \cite{m04} scaled the ambiguity aversion with the risk aversion or equivalently, use $\mu_1 \xi'$ in the place of $\xi$ with some constant $\xi'$. \cite[Footnote 9]{m04} explained this formulation for two reasons. First, it maintains the positivity of the preference since $1 - \gamma$ in the power utility may have different signs. Second, it recovers the logarithmic utility when $\gamma \rightarrow 1$. However, these two reasons do not apply to our MV framework. Therefore, we set the preference parameters without scaling. Nevertheless, it is crucial to investigate the relationship between ambiguity aversion and risk aversion. In the next subsection, we consider linear and quadratic relationships to motivate discussions on this underexplored topic in the literature.
	
	\subsection{Discussion: Relations between Risk and Ambiguity Aversions}
		
	The case where $\mu_1 \to \infty$ in Section \ref{sec:risk} prompts the question: if we also let $\xi \to \infty$ simultaneously, at what rate can we recover behavior similar to \eqref{eq:diverge}? Our framework allows for a flexible relationship between ambiguity and risk aversion. While analytical analysis is challenging, we can explore this problem numerically.
		
	We begin by considering a linear specification, $\xi = 5.0 + 2.0\mu_1$, which yields numerically stable results under the current parameter settings. As shown in Figure \ref{fig:linear}, the state-dependent cases converge to finite mean values and standard deviations. This convergence is verified numerically, as further increases in the aversion parameters do not noticeably alter the mean or standard deviation. Experimentally, the limit is highly dependent on the path taken to reach it, and consequently, on the coefficients of the linear relationship.

	\begin{figure}
			\centering
			\includegraphics[width=0.4\linewidth]{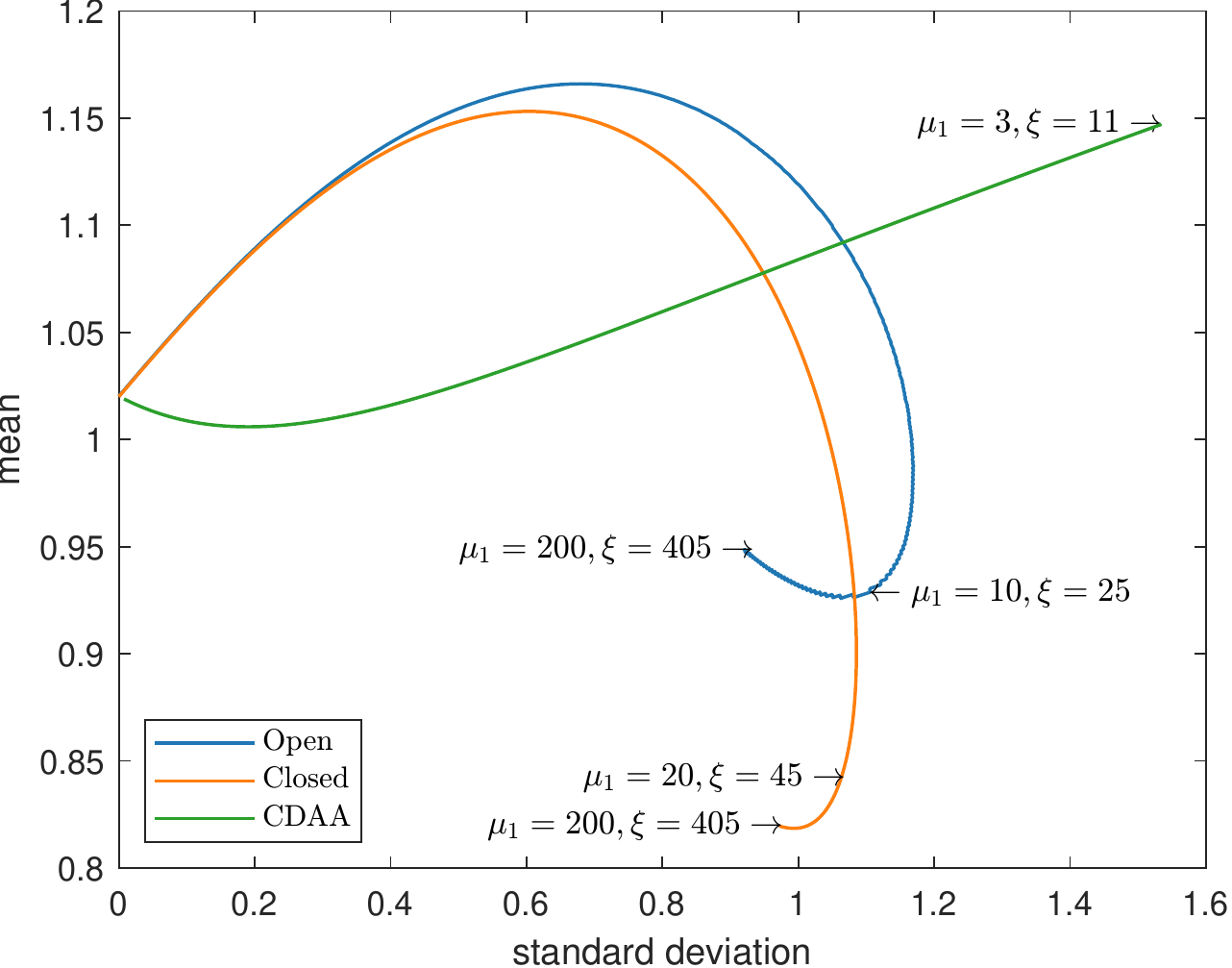}
			\caption{A linear relation between risk and ambiguity aversion}
			\label{fig:linear}
	\end{figure}
		
		Both the open-loop equilibrium in \eqref{eq:alpha*} and the closed-loop equilibrium in \cite{hpw21FS} share the same terminal value:
		\begin{equation}\label{}
			\alpha^*_T = \frac{\mu_1 B_T}{1 + \mu^2_1/\xi}.
		\end{equation}
		To ensure that $\alpha^*_T$ tends to infinity, a natural choice is to consider a quadratic relationship, $\xi = a_1 \mu^2_1$, with some constant $a_1 > 0$. Figure \ref{fig:quad} shows that, in contrast to Figure \ref{fig:linear}, the mean values and standard deviations do not converge, suggesting that the quadratic relation is a more plausible choice when $\mu_1$ is large.	
		
		\begin{figure}
			\centering
			\includegraphics[width=0.4\linewidth]{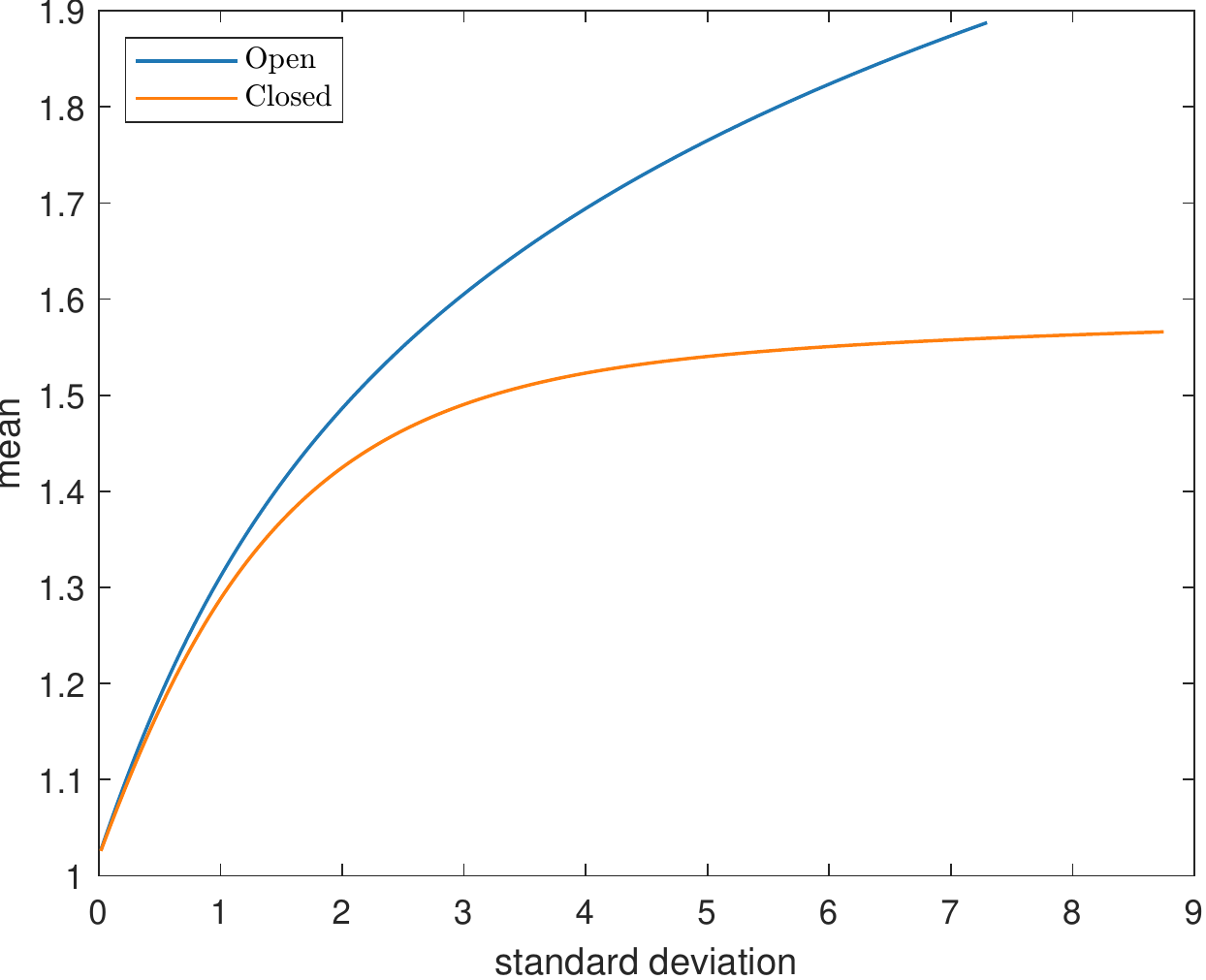}
			\caption{A quadratic relation between risk and ambiguity aversion}
			\label{fig:quad}
		\end{figure}

\section{Conclusions} \label{sec:con}
	With a general LQ framework and two semi-analytic examples, we unveil a complex interplay between risk and ambiguity. An open question is, does an agent with high risk aversion also have high ambiguity aversion? There are different pieces of evidence in the literature. A psychological study by \cite{borghans2009gender} documented that risk and ambiguity are related to cognitive and noncognitive traits with gender differences. Women are more risk averse than men, but women initially respond to ambiguity much more favorably than men, i.e., their reservation price as defined in \cite{borghans2009gender} does not decline. In contrast, to explain the equity premium puzzle, \cite[Table 2]{m04} showed two sets of aversion parameters for equilibrium asset pricing models to fit the empirical data in different periods. The high risk aversion case also has a higher ambiguity aversion parameter. Given these inconsistent results in the literature, a further study is needed and our analysis has provided a novel perspective from the equilibrium portfolio selection.

\section*{Acknowledgements}
This work began when Bingyan Han was a postdoctoral researcher in the Department of Mathematics at the University of Michigan. He expresses gratitude to the University of Michigan for providing support and an atmosphere conducive to this work. Bingyan Han is also partially supported by the HKUST (GZ) Start-up Fund G0101000197 and the Guangzhou-HKUST(GZ) Joint Funding Program (No. 2024A03J0630). Chi Seng Pun acknowledges the support from Singapore Ministry of Education Academic Research Fund Tier 2 [MOE-T2EP20220-0013]. Hoi Ying Wong acknowledges the support from the Research Grants Council of Hong Kong via GRF14308422.

\appendix
\section{Proofs}\label{sec:app}
\begin{proof}[Proof of Theorem \ref{Thm:hvariate}]
	Consider $t\in [0,T)$, $\varepsilon>0$, $\eta \in L^\infty_{\cF_t}(\Omega; \, \R^d)$, and $h^{t,\varepsilon,\eta}$ defined in \eqref{hspike}. Let $\zeta^{t,\varepsilon,\eta}(s;t)$ be the state process corresponding to $u^*$ and $h^{t,\varepsilon,\eta}$. Define $Y^\zeta$ and $Z^\zeta$ by
	\begin{align*}
		\left\{\begin{array}{lclll}
			d Y^\zeta_s  &=& \big(Y^\zeta_s h^*_s + \zeta^*(s;t) \eta \id_{s\in [t, t+\varepsilon)} \big)' dW^\p_s, & s\in[t,T], & Y^\zeta_t = 0,\\
			dZ^\zeta_s  &=& \big( Z^\zeta_s h^*_s + Y^\zeta_s \eta \id_{s\in [t, t+\varepsilon)} \big)' dW^\p_s, & s\in[t,T], & Z^\zeta_t = 0.
		\end{array}\right.
	\end{align*}
	Denote $R^\zeta_s := \zeta^{t,\varepsilon,\eta}(s;t) - \zeta^*(s;t) - Y^\zeta_s - Z^\zeta_s$. Note that $h^*$ is essentially bounded, one has the following standard estimates:
	\begin{align*}
		&\E^\p_t\Big[\sup_{s\in [t,T]} | Y^\zeta_s |^2\Big] = O(\varepsilon), \quad \E^\p_t\Big[\sup_{s\in [t,T]} |Z^\zeta_s |^2 \Big] = O(\varepsilon^2), \quad \E^\p_t\Big[ \sup_{s\in [t,T]} | R^\zeta_s |^2 \Big] = o(\varepsilon^2).
	\end{align*}
	Direct calculation leads to
	\begin{align*}
		&J(t, X^*_t, \zeta^*; u^*, h^{t,\varepsilon,\eta}) - J(t, X^*_t, \zeta^*; u^*, h^*) \\
		& \qquad =\E^\p_t \Big[ \int_t^T\Big( (Y^\zeta_s + Z^\zeta_s) f(X^*_s, u^*_s, h^*_s, s) \\
		& \hspace{2.7cm} -\frac{1}{2} \zeta^*(s;t) \ang{ (2 h^*_s + \eta)' \Phi(X^*_s, u^*_s, s), \eta} \id_{s\in [t, t+\varepsilon)}  \Big) ds \Big] \\
		&\qquad \quad + \E^\p_t \Big[ \big( \frac{1}{2} \ang{G X^*_T, X^*_T} - \ang{\nu \mathbb{E}^\p_t [\zeta^*(T;t) X^*_T], X^*_T} - \ang{ \mu_1 x_t+\mu_2, X^*_T} \big) \big(Y^\zeta_T + Z^\zeta_T\big) \Big]\\
		& \qquad \quad - \frac{1}{2} \ang{\nu\E^\p_t[Y^\zeta_T X^*_T], \E^\p_t[ Y^\zeta_T X^*_T]} + o(\varepsilon) \\
		& \qquad \quad + \E^\p_t \Big[ \int_t^T R^\zeta_s f(X^*_s, u^*_s, h^*_s, s) ds \Big] + \E^\p_t \Big[ \big( \frac{1}{2} \ang{G X^*_T, X^*_T} - \ang{ \mu_1 x_t+\mu_2, X^*_T} \big)  R^\zeta_T \Big] .
	\end{align*}
	Assumptions \ref{assum:state} and \ref{assum:obj} entail that $X^* \in L^4_\F(\Omega; \, C(0, \, T; \, \R^n))$ and $f(X^*, u^*, h^*, \cdot) \in L^2_\F(0,T; \R)$. With H\"{o}lder's inequality, the last two terms are of order $o(\varepsilon)$.
	
	By It\^o's lemma and the definition of $(p^\zeta(\cdot; t),k^\zeta(\cdot; t))$ in \eqref{pzeta}, the second term is reduced to
	\begin{align*}
		&\E^\p_t \Big[ \big( \frac{1}{2} \ang{G X^*_T, X^*_T} - \ang{\nu \mathbb{E}^\p_t [\zeta^*(T;t) X^*_T], X^*_T} - \ang{ \mu_1 x_t+\mu_2, X^*_T} \big) \big(Y^\zeta_T + Z^\zeta_T\big) \Big]\\
		&\qquad = - \E^\p_t \left[ \int_t^T \big( (h^*_s)' k^\zeta(s;t) + f(X^*_s, u^*_s, h^*_s, s) \big) (Y^\zeta_s + Z^\zeta_s) ds \right]\\
		&\qquad \quad + \E^\p_t \left[ \int_t^T \big( (Y^\zeta_s + Z^\zeta_s)h^*_s + \zeta^*(s;t) \eta \id_{s\in [t, t+\varepsilon)} + Y^\zeta_s \eta \id_{s\in [t, t+\varepsilon)} \big)' k^\zeta(s;t)ds \right] \\
		&\qquad = - \E^\p_t \Big[ \int_t^T  (Y^\zeta_s + Z^\zeta_s) f(X^*_s, u^*_s, h^*_s, s) ds \Big] \\
		& \qquad \quad + \E^\p_t \Big[ \int_t^T \zeta^*(s;t) \eta' k^\zeta(s;t)\id_{s\in [t, t+\varepsilon)}ds \Big] + o(\varepsilon).
	\end{align*}
	It follows that
	\begin{align*} 
		&J(t, X^*_t, \zeta^*; u^*, h^{t,\varepsilon,\eta}) - J(t, X^*_t, \zeta^*; u^*, h^*) \\
		& \qquad = \E^\p_t \left[ \int_t^{t+\varepsilon} \ang{\Lambda^\zeta(s;t),\eta} ds  \right] -\frac{1}{2} \E^\p_t \left[ \int_t^{t+\varepsilon} \zeta^*(s;t) \ang{\Phi(X^*_s, u^*_s, s)\eta, \eta} ds \right] \\
		& \qquad \quad - \frac{1}{2} \ang{\nu\E^\p_t[ Y^\zeta_T X^*_T], \E^\p_t[Y^\zeta_T X^*_T]} + o(\varepsilon).
	\end{align*}
	$\ang{\nu\E^\p_t[ Y^\zeta_T X^*_T], \E^\p_t[Y^\zeta_T X^*_T]}$ is of order $O(\varepsilon)$ in general. However, since $\nu \in \mathbb{S}^n$ and $\nu \succeq 0$, the term is non-negative. Since $\Phi(X^*_s, u^*_s, s) \succeq 0$, we obtain the claim as desired.
\end{proof}


\begin{proof}[Proof of Theorem \ref{Thm:uvariate}]
	The main idea is to consider Taylor expansions of the cost functionals up to the second order in spike variation.
	
	For any $t\in [0,T)$, $\varepsilon>0$ and $v\in L^4_{\cF_t}(\Omega; \, \R^l)$,  define $u^{t,\varepsilon,v}$ by \eqref{uspike}. Let $X^{t,\varepsilon,v}$ be the state process under $u^{t,\varepsilon,v}$ and $h^*$. Let $Y^x$ and $Z^x $ be the solutions to
	\begin{align*}
		\left\{\begin{array}{lcl}
			dY^x_s  &=&  A_s Y^x_s ds + \sum^d_{j=1} \big( C^j_s Y^x_s + D^j_s v \id_{s\in [t, t+\varepsilon)} \big) dW^\p_{js},  \;\; s\in[t,T], \\
			Y^x_t  &=& 0, \\
			dZ^x_s & = & \big(A_s Z^x_s + B' v \id_{s\in [t, t+\varepsilon)} \big)ds + \sum^d_{j=1} C^j_s Z^x_s dW^\p_{js},  \; s\in[t,T],\\
			Z^x_t & = & 0.
		\end{array}\right.
	\end{align*}
	It is immediate that the linear state process satisfies $X^{t,\varepsilon, v}_s = X^*_s + Y^x_s + Z^x_s$. Moreover, $\E^\p_t\Big[\sup_{s\in [t,T]} | Y^x_s |^2\Big] = O(\varepsilon)$ and $\E^\p_t\Big[\sup_{s\in [t,T]} |Z^x_s |^2 \Big] = O(\varepsilon^2)$.
	
	To expand $J(t, X^*_t, \zeta^*; u^{t,\varepsilon,v}, h^*) - J(t, X^*_t, \zeta^*; u^*, h^*)$, we consider the difference separately. The integral part leads to
	\begin{align*}
		& \E^\p_t \Big[\int_t^T\zeta^*(s;t) \frac{1}{2} \Big(\ang{Q_s X^{t,\varepsilon, v}_s, X^{t,\varepsilon, v}_s}+\ang{ R_s u^{t,\varepsilon, v}_s, u^{t,\varepsilon, v}_s}  \\
		& \hspace{3cm} - (h^*_s)' \Phi (X^{t,\varepsilon, v}_s, u^{t,\varepsilon, v}_s, s) h^*_s \Big) ds \Big] \\
		& - \E^\p_t \Big[\int_t^T\zeta^*(s;t) \frac{1}{2} \Big(\ang{Q_s X^*_s, X^*_s} + \ang{R_s u^*_s, u^*_s} - (h^*_s)' \Phi (X^*_s, u^*_s, s) h^*_s \Big) ds \Big] \\
		& = \E^\p_t \Big[\int_t^T\zeta^*(s;t) \Big\{ \frac{1}{2} \ang{Q_s (Y^x_s + Z^x_s), (Y^x_s + Z^x_s)} + \ang{Q_s X^*_s, (Y^x_s + Z^x_s)} \\ 
		& \hspace{3cm} + \frac{1}{2} \ang{R_s v, v} \id_{s\in [t, t+\varepsilon)} + \ang{R_s u^*_s, v} \id_{s\in [t, t+\varepsilon)} \\
		& \hspace{3cm} - \frac{1}{2} \sum^d_{j=1} \ang{\Xi_{1, j} (Y^x_s + Z^x_s), Y^x_s + Z^x_s} (h^*_{js})^2 \\
		&\hspace{3cm}  - \frac{1}{2} \sum^d_{j=1} \ang{\Xi_{2, j} v, v} (h^*_{js})^2 \id_{s\in [t, t+\varepsilon)} - \sum^d_{j=1} \ang{\Xi_{1, j} X^*_s, Y^x_s + Z^x_s} (h^*_{js})^2 \\
		& \hspace{3cm}  - \sum^d_{j=1} \ang{\Xi_{2, j} u^*_s, v} (h^*_{js})^2 \id_{s\in [t, t+\varepsilon)} \Big\} ds \Big].
	\end{align*}
	
	The terminal part yields
	{\allowdisplaybreaks
		\begin{align*}
			& \frac{1}{2} \E^\p_t [\zeta^*(T;t) \ang{G X^{t,\varepsilon, v}_T, X^{t,\varepsilon, v}_T}] - \ang{ \mu_1 x_t+\mu_2, \E^\p_t [\zeta^*(T;t) X^{t,\varepsilon, v}_T]} \\
			& \quad - \frac{1}{2} \E^\p_t [\zeta^*(T;t) \ang{G X^*_T, X^*_T}] + \ang{ \mu_1 x_t+\mu_2, \E^\p_t [\zeta^*(T;t) X^*_T]}\\
			& \quad - \frac{1}{2} \ang{\nu \E^\p_t [\zeta^*(T;t) X^{t,\varepsilon, v}_T], \E^\p_t [\zeta^*(T;t) X^{t,\varepsilon, v}_T]} \\ 
			& \quad + \frac{1}{2} \ang{\nu \E^\p_t [\zeta^*(T;t) X^*_T], \E^\p_t [\zeta^*(T;t) X^*_T]} \\
			& =  \frac{1}{2} \E^\p_t [\zeta^*(T;t) \ang{(G - \nu) (Y^x_T + Z^x_T), (Y^x_T + Z^x_T)}] + \E^\p_t [\zeta^*(T;t) \ang{G X^*_T, (Y^x_T + Z^x_T)}] \\
			& \quad - \ang{ \mu_1 x_t+\mu_2, \E^\p_t [\zeta^*(T;t) (Y^x_T + Z^x_T)]} - \ang{\nu \E^\p_t [\zeta^*(T;t) X^*_T], \E^\p_t [\zeta^*(T;t) (Y^x_T + Z^x_T)]} \\
			& \quad + \frac{1}{2} \E^\p_t [\zeta^*(T;t) \ang{\nu (Y^x_T + Z^x_T), (Y^x_T + Z^x_T)}] \\
			& \quad - \frac{1}{2} \ang{\nu \E^\p_t [\zeta^*(T;t) (Y^x_T + Z^x_T)], \E^\p_t [\zeta^*(T;t) (Y^x_T + Z^x_T)]}.
		\end{align*}
	}
	Since $ \nu \in \mathbb{S}^n$ and $ \nu \succeq 0$, the last two terms above satisfy
	\begin{align}
		& \frac{1}{2} \E^\p_t [\zeta^*(T;t) \ang{\nu (Y^x_T + Z^x_T), (Y^x_T + Z^x_T)}] \nonumber \\
		& \qquad - \frac{1}{2} \ang{\nu \E^\p_t [\zeta^*(T;t) (Y^x_T + Z^x_T)], \E^\p_t [\zeta^*(T;t) (Y^x_T + Z^x_T)]} \geq 0. \label{eq:YZineq}
	\end{align}
	Moreover, they are of order $O(\varepsilon)$ in general.
	
	By the definition of $Y^x$, $Z^x$, and $(p^x(\cdot;t), k^x(\cdot;t))$, an application of It\^o's lemma simplifies $\E^\p_t[ \zeta^*(T;t) \ang{ G X^*_T - \nu  \E^\p_t [\zeta^*(T;t) X^*_T] - \mu_1 x_t - \mu_2, (Y^x_T + Z^x_T)}]$ as
	\begin{align*}
		& \E^\p_t \Big[ \int^T_t \Big( - \zeta^*(s;t) \ang{f^*_x (X^*_s, u^*_s, s), (Y^x_s + Z^x_s)} \\
		& \hspace{1.5cm} + \ang{B_s p^x(s;t) + \sum^d_{j=1} (D^j_s)' k^{x, j}(s; t), v} \id_{s\in [t, t+\varepsilon)} \Big)ds \Big],
	\end{align*}
	and $ \E^\p_t [\zeta^*(T;t) \ang{(G - \nu) (Y^x_T + Z^x_T), (Y^x_T + Z^x_T)}]$ as
	{\allowdisplaybreaks
		\begin{align*}
			& \E^\p_t \Big[\int_t^T \Big\{ - \zeta^*(s;t) \ang{Q_s (Y^x_s + Z^x_s), (Y^x_s + Z^x_s)} \\
			& \hspace{1.5cm} + \zeta^*(s;t) \sum^d_{j=1} \ang{\Xi_{1, j} (Y^x_s + Z^x_s), Y^x_s + Z^x_s} (h^*_{js})^2 \\
			& \hspace{1.5cm} + 2 \ang{P(s;t) (Y^x_s + Z^x_s), B'_s v} \id_{s\in [t, t+\varepsilon)} \\
			& \hspace{1.5cm} + 2 \sum^d_{j=1} \ang{ K^j(s;t) (Y^x_s + Z^x_s), D^j_s v } \id_{s\in [t, t+\varepsilon)} \\
			& \hspace{1.5cm} + \sum^d_{j=1} \ang{ P(s;t) C^j_s (Y^x_s + Z^x_s), D^j_s v} \id_{s\in [t, t+\varepsilon)}  \\
			& \hspace{1.5cm} + \sum^d_{j=1} \ang{(D^j_s)' P(s;t) D^j_s v, v} \id_{s\in [t, t+\varepsilon)} \Big\} ds \Big] \\
			& =  \E^\p_t \Big[\int_t^T \Big\{ - \zeta^*(s;t) \ang{Q_s (Y^x_s + Z^x_s), (Y^x_s + Z^x_s)} \\
			& \hspace{1.5cm} + \zeta^*(s;t) \sum^d_{j=1} \ang{\Xi_{1, j} (Y^x_s + Z^x_s), Y^x_s + Z^x_s} (h^*_{js})^2 \\
			& \hspace{1.5cm} + \sum^d_{j=1} \ang{(D^j_s)' P(s;t) D^j_s v, v} \id_{s\in [t, t+\varepsilon)} \Big\} ds \Big] + o(\varepsilon).
		\end{align*}
	}
	Summing up and we obtain
	\begin{align}
		& J(t, X^*_t, \zeta^*; u^{t,\varepsilon,v}, h^*) - J(t, X^*_t, \zeta^*; u^*, h^*) \\
		& \qquad \geq \E^\p_t \Big[ \int_t^{t+\varepsilon} \ang{\Lambda^x(s; t), v} ds \Big] \nonumber + \frac{1}{2} \E^\p_t \Big[ \int_t^{t+\varepsilon} \ang{\Sigma(s;t) v ,v} ds \Big] + o(\varepsilon). \nonumber
	\end{align}
	The inequality is from the property of $Y^x$ and $Z^x$ in \eqref{eq:YZineq}. The inequality is also tight. Therefore, a sufficient condition is given in \eqref{eq:u-suff}.
\end{proof}


\begin{proof}[Proof of Corollary \ref{cor:special}]
	Denote the Brownian motion and the conditional expectation corresponding to $h^*$ by $W^*$ and $\E^*_t[\cdot]$, respectively. Then
	\begin{align*}
		\left\{\begin{array}{lcl}
			dY^x_s  &=&  \big(A_s Y^x_s + \sum^d_{j=1} h^*_{js} D^j_s v \id_{s\in [t, t+\varepsilon)} \big) ds \\
			& & + \sum^d_{j=1} D^j_s v \id_{s\in [t, t+\varepsilon)} dW^*_{js},  \;\; s\in[t,T], \\
			Y^x_t  &=& 0.
		\end{array}\right.
	\end{align*}
	Since $A$ is deterministic, it is direct to prove that $\E^\p_t [\zeta^*(s;t) Y^x_s] = \E^*_t [Y^x_s] = O(\varepsilon)$ for $s \in [t, T]$. Therefore, $\ang{\nu \E^\p_t [\zeta^*(T;t) Y^x_T], \E^\p_t [\zeta^*(T;t) Y^x_T]} = o(\varepsilon)$. We do not need to subtract $G$ by $\nu$ to bound it. \eqref{eq:YZineq} is not needed. Then the claim follows in the same manner as in Theorem \ref{Thm:uvariate}.
\end{proof}


\begin{proof}[Proof of Lemma \ref{Lem:StateODE}]
	For simplicity, we omit the time script $s$ when no confusion arises. With $h^*$ in \eqref{eq:h*}, we first simplify the ODE system \eqref{eq:LHF}-\eqref{eq:MNGamma} to 
	\begin{align}\label{eq:Simp-StateODE}
		\left\{\begin{array}{lcl}
			\dot{L} &=& - [2(A + B' \alpha^*) + (2 E/\xi_1 + 1)| C + D \alpha^*|^2] L - Q/2  \\
			& & - \ang{R \alpha^*, \alpha^*}/2 + E^2 |C + D \alpha^*|^2/(2\xi_1), \\
			\dot{H} &=& - [2(A + B' \alpha^*) + 2|C + D \alpha^*|^2 E/\xi_1] H, \\
			\dot{F} &=& - [(A + B' \alpha^*) + |C + D \alpha^*|^2 E/\xi_1] F, \\
			\dot{M} &=& - [2A + B' \alpha^* + (2C + D \alpha^*)'(C + D \alpha^*) E/\xi_1 + C'(C + D \alpha^*)] M  \\
			& & - Q + E^2 |C + D \alpha^*|^2/\xi_1, \\
			\dot{N} &=& - [2A + B' \alpha^* + (2C + D \alpha^*)'(C + D \alpha^*) E/\xi_1] N, \\
			\dot{\Gamma} &=& - [A + C'(C + D \alpha^*) E/\xi_1] \Gamma.
		\end{array}\right.
	\end{align}
	Truncate $\Delta$, $E$, $M$, and $L$ as follows. With given constants $\ubar{m}$, $\bar{m}$, $\ubar{\delta}$, and $\ubar{e}$, consider functions 
	\begin{align*}
		\Delta_c &= \max\{\ubar{\delta}, \min\{\Delta, 0\}\}, \quad E_c = \max\{\ubar{e}, \min\{E, 0\}\}, \\
		M_c &= \max\{\ubar{m}, \min\{M, \bar{m}\}\}, \quad L_c := \max\{L, 0\},
	\end{align*}
	such that $\ubar{\delta} \leq \Delta_c \leq 0$, $\ubar{e} \leq E_c \leq 0$, $0 < \ubar{m} \leq M_c \leq \bar{m}$, and $L_c \geq 0$. Replace the ODE system \eqref{eq:Simp-StateODE} with truncated terms on the right hand side of \eqref{eq:Simp-StateODE}. Denote $\alpha^*_c$ as $\alpha^*$ with $\Delta, E, M$ replaced by $\Delta_c, E_c$, and $M_c$.  
	
	First, we derive bounds on $B'\alpha^*_c$, $C'D \alpha^*_c$, $(\alpha^*_c)' D'D \alpha^*_c$, and $(\alpha^*_c)' R \alpha^*_c$. Denote $W := R + (M_c + \Delta_c E_c/\xi_1) D'D$. Note that $ \Delta_c E_c \geq 0$, then
	\begin{equation*}
		\tau I \preceq  R + \ubar{m} D'D \preceq W \preceq R + (\bar{m} + \ubar{\delta}\ubar{e} /\xi_1) D'D.
	\end{equation*} 
	To make notations concrete, we denote this inequality as $ \ubar{W} \preceq W \preceq \bar{W}$. Observe that
	\begin{equation*}
		B'\alpha^*_c = - B' W^{-1} D' C (M_c + \Delta_c E_c/\xi_1)  - B' W^{-1} B \Delta_c,
	\end{equation*}
	and $-(B'W^{-1} B + C' D W^{-1} D'C)/2 \leq B' W^{-1} D' C \leq (B'W^{-1} B + C' D W^{-1} D'C)/2$, then for any $ s \in [0, T]$,
	\begin{align*}
		B'\alpha^*_c & \geq - (\bar{m} + \ubar{\delta} \ubar{e}/ \xi_1)(B'\ubar{W}^{-1} B + C' D \ubar{W}^{-1} D'C)/2 =: \ubar{b}, \\
		B'\alpha^*_c & \leq (\bar{m} + \ubar{\delta} \ubar{e}/ \xi_1)(B'\ubar{W}^{-1} B + C' D \ubar{W}^{-1} D'C)/2  - \ubar{\delta} B' \ubar{W}^{-1} B =: \bar{b}.
	\end{align*}
	In a similar manner, bounds for other terms are given by
	\begin{align*}
		C'D\alpha^*_c & \geq - (\bar{m} + \ubar{\delta} \ubar{e}/ \xi_1)C' D \ubar{W}^{-1} D'C + \ubar{\delta}(B'\ubar{W}^{-1} B + C' D \ubar{W}^{-1} D'C)/2 =: \ubar{c}, \\
		C'D \alpha^*_c & \leq - \ubar{m} C'D \bar{W}^{-1} D'C - \ubar{\delta}(B'\ubar{W}^{-1} B + C' D \ubar{W}^{-1} D'C)/2 =: \bar{c}.
	\end{align*}
	$(\alpha^*_c)' D'D \alpha^*_c \geq 0$ and
	\begin{align*}
		(\alpha^*_c)' D'D \alpha^*_c  \leq 2 \ubar{\delta}^2 B' \ubar{W}^{-1} D'D \ubar{W}^{-1} B + 2 (\bar{m} + \ubar{\delta} \ubar{e}/ \xi_1)^2 C'D \ubar{W}^{-1} D'D \ubar{W}^{-1} D'C =: \bar{d}.
	\end{align*}
	$(\alpha^*_c)' R \alpha^*_c \geq 0$ and
	\begin{align*}
		(\alpha^*_c)' R \alpha^*_c  \leq 2 \ubar{\delta}^2 B' \ubar{W}^{-1} R \ubar{W}^{-1} B + 2 (\bar{m} + \ubar{\delta} \ubar{e}/ \xi_1)^2 C'D \ubar{W}^{-1} R \ubar{W}^{-1} D'C =: \bar{r}.
	\end{align*}
	Crucially, these bounds $\bar{b}$, $\ubar{b}$, $\bar{c}$, $\ubar{c}$, $\bar{d}$, and $\bar{r}$ do not rely on solutions to the truncated system. It follows that the derivatives are bounded by
	\begin{align*}
		\dot{L}_s &\leq - [2 A_s + 2 \ubar{b}_s + 2 \ubar{e}_s (|C_s|^2 + 2 \bar{c}_s + \bar{d}_s)/\xi_1] L_s - Q_s/2 + \ubar{e}^2_s  (|C_s|^2 + 2 \bar{c}_s + \bar{d}_s)/(2\xi_1) \\
		&=: - U_{L,1}(s) L_s - U_{L, 0}(s), \\
		\dot{L}_s &\geq - [2 A_s + 2 \bar{b}_s + (|C_s|^2 + 2 \bar{c}_s + \bar{d}_s)] L_s - Q_s/2 - \bar{r}_s/2 =: - V_{L,1}(s) L_s - V_{L, 0}(s), \\
		\dot{H}_s &\leq - [2 A_s + 2 \ubar{b}_s + 2 \ubar{e}_s (|C_s|^2 + 2 \bar{c}_s + \bar{d}_s)/\xi_1] H_s =: - U_{H,1}(s) H_s, \\
		\dot{H}_s &\geq - [2 A_s + 2 \bar{b}_s] H_s =: - V_{H,1}(s) H_s, \\
		\dot{F}_s &\leq - [A_s + \ubar{b}_s + \ubar{e}_s (|C_s|^2 + 2 \bar{c}_s + \bar{d}_s)/\xi_1] F_s =: - U_{F,1}(s) F_s, \\
		\dot{F}_s &\geq - [A_s + \bar{b}_s] F_s =: - V_{F,1}(s) F_s,
	\end{align*}
	and	
	\begin{align*}
		\dot{M}_s \leq& - [2 A_s + \ubar{b}_s + \ubar{e}_s \max\{ 2|C_s|^2 + 3\bar{c}_s + \bar{d}_s, 0 \} /\xi_1 + |C_s|^2 + \ubar{c}_s ] M_s \\
		& - Q_s + \ubar{e}^2_s  (|C_s|^2 + 2 \bar{c}_s + \bar{d}_s)/\xi_1 \\
		=:& - U_{M,1}(s) M_s - U_{M, 0}(s), \\
		\dot{M}_s \geq& - [2 A_s + \bar{b}_s + \ubar{e}_s \min\{ 2|C_s|^2 + 3\ubar{c}_s, 0\} /\xi_1 + |C_s|^2 + \bar{c}_s ] M_s - Q_s \\
		=:& - V_{M,1}(s) M_s - V_{M, 0}(s), \\
		\dot{N}_s \leq& - [2 A_s + \ubar{b}_s + \ubar{e}_s \max\{ 2|C_s|^2 + 3\bar{c}_s + \bar{d}_s, 0 \} /\xi_1] N_s =: - U_{N,1}(s) N_s, \\
		\dot{N}_s \geq& - [2 A_s + \bar{b}_s + \ubar{e}_s \min\{ 2|C_s|^2 + 3\ubar{c}_s, 0\} /\xi_1 ] N_s =: - V_{N,1}(s) N_s, \\
		\dot{\Gamma}_s \leq& - [A_s + \ubar{e}_s \max\{ |C_s|^2 + \bar{c}_s, 0 \} /\xi_1] \Gamma_s =: - U_{\Gamma,1}(s) \Gamma_s, \\
		\dot{\Gamma}_s \geq& - [A_s + \ubar{e}_s \min\{|C_s|^2 + \ubar{c}_s, 0\} /\xi_1 ]  \Gamma_s =: - V_{\Gamma,1}(s) \Gamma_s.
	\end{align*}
	Thus, the truncated system is locally Lipschitz with linear growth. The existence and uniqueness of the solution to the truncated system is then guaranteed. By comparison theorem of ODEs, we obtain that
	\begin{align*}
		L_c(t) \leq& \frac{G}{2} e^{\int^T_t V_{L,1}(s) ds} + \int^T_t  e^{\int^u_t V_{L,1}(s) ds} V_{L, 0}(u) du =: \bar{L}(t), \\
		L_c(t) \geq& \frac{G}{2} e^{\int^T_t U_{L,1}(s) ds} + \int^T_t  e^{\int^u_t U_{L,1}(s) ds} U_{L, 0}(u) du =: \ubar{L}(t).
	\end{align*}
	Similarly, we can define $\ubar{M}(t)$, $\bar{M}(t)$, $\ubar{N}(t)$, $\bar{N}(t)$, $\ubar{\Gamma}(t)$, $\bar{\Gamma}(t)$, $\ubar{H}(t)$, $\bar{H}(t)$, $\ubar{F}(t)$, and $\bar{F}(t)$ with $U$ and $V$ that are independent of solutions to ODEs. Since \eqref{cond:StateODE} holds, then the truncation constants are not binding. The truncated solution is indeed the solution to the original ODE system.
\end{proof}

\begin{proof}[Proof of Proposition \ref{Cor:StateAdm}]
	We only need to verify the moment conditions in Assumption \ref{assum:state}. Lemma \ref{Lem:StateODE} implies the ODE system has a continuous solution pair and $M \geq \ubar{m} > 0$. Then $(h^*, \alpha^*)$ is essentially bounded. It is immediate that $u^* \in L^4_\F(0, \, T; \, \R^l)$. Since sufficient conditions \eqref{eq:h-suff} and \eqref{eq:u-suff} hold, $(h^*, u^*)$ is an equilibrium. 
\end{proof}


\begin{proof}[Proof of Lemma \ref{Lem:CtrlODE}]
	We truncate $\Delta$, $E$, and $M$ as $\Delta_c$, $E_c$, and $M_c$ satisfying
	$|\Delta_c| \leq \bar{\delta}$,  $|E_c| \leq \bar{e}$,  and $0< \ubar{m} \leq M_c \leq \bar{m}$.
	
	The discriminant of the quadratic equation \eqref{eq:quad-alpha} satisfies
	\begin{align*}
		\beta^2_j - 4 \kappa \gamma =& B^2_{jt} \Delta^2_t - 4 (D^2_t M_t + R_t) D^2_t(\Delta_t E_t - E^2_t)/(\xi_2 l) \\
		= & [4 (D^2_t M_t + R_t) D^2_t (E_t - \Delta_t/2)^2 + (\xi_2 l B^2_{jt} - D^2_t (D^2_t M_t + R_t)) \Delta^2_t]   /(\xi_2 l).
	\end{align*}
	Under the condition \eqref{eq:discriminant}, the discriminant is non-negative with the truncated solutions. Since $\alpha^*_j$ is a root of the quadratic equation \eqref{eq:quad-alpha}, it satisfies
	\begin{align*}
		\left(\alpha^*_j + \frac{\beta_j}{2 \kappa} \right)^2 &= \frac{\beta^2_j - 4 \kappa \gamma}{4 \kappa^2} \leq \frac{\beta^2_j}{4 \kappa^2} + \frac{D^2}{\xi_2 l (D^2 M_c + R)} (E^2_c + |\Delta_c E_c|).
	\end{align*}
	With the elementary inequality $\sqrt{a^2+b^2} \leq |a| + |b|$, we obtain
	\begin{align*}
		\left|\alpha^*_j + \frac{\beta_j}{2 \kappa} \right| & \leq \frac{|\beta_j|}{2 |\kappa|} + \frac{|D|}{\sqrt{\xi_2 l (D^2 M_c + R)}} \sqrt{E^2_c + |\Delta_c E_c|}.
	\end{align*}
	Therefore, with truncated solutions, it follows that
	\begin{align*}
		\left|\alpha^*_j  \right| & \leq \frac{|\beta_j|}{|\kappa|} + \frac{|D|}{\sqrt{\xi_2 l (D^2 M_c + R)}} \sqrt{E^2_c + |\Delta_c E_c|} \\
		& \leq \frac{|B_j| \bar{\delta}}{D^2 \ubar{m} + R} + \frac{|D|\sqrt{\bar{e}^2 + \bar{\delta} \bar{e}}}{\sqrt{\xi_2 l (D^2 \ubar{m} + R)}} =: \bar{\alpha}_j.
	\end{align*}
	We can bound the following terms as $|B'\alpha^*| \leq \sum^l_{j=1} |B_j| \bar{\alpha}_j =: \bar{b}$ and $0 \leq |\alpha^*|^2 \leq \sum^l_{j=1} \bar{\alpha}^2_j =: |\bar{\alpha}|^2$.
	The derivatives with truncated solutions are bounded as
	\begin{align*}
		\dot{L}_s \leq& -[2A_s - 2 \bar{b}_s - 2 D^2_s \bar{e}_s/\xi_2] L_s - Q_s/2 + \bar{e}^2_s D^2_s /(2\xi_2) \\
		=:& - U_{L,1}(s) L_s - U_{L, 0}(s), \\
		\dot{L}_s \geq& -[2A_s + 2 \bar{b}_s + 2 D^2_s \bar{e}_s/\xi_2 + D^2_s |\bar{\alpha}_s|^2] L_s - Q_s/2 - R_s |\bar{\alpha}_s|^2/2 \\
		=:& - V_{L,1}(s) L_s - V_{L, 0}(s), \\
		\dot{H}_s \leq& -[2A_s - 2\bar{b}_s - 2 D^2_s \bar{e}_s/\xi_2 ] H_s =: - U_{H,1}(s) H_s, \\
		\dot{H}_s \geq& -[2A_s + 2\bar{b}_s + 2 D^2_s \bar{e}_s/\xi_2] H_s =: - V_{H,1}(s) H_s, \\
		\dot{F}_s \leq& -[A_s - \bar{b}_s - D^2_s \bar{e}_s/\xi_2 ] F_s =: - U_{F,1}(s) F_s, \\
		\dot{F}_s \geq& -[A_s + \bar{b}_s + D^2_s \bar{e}_s/\xi_2] F_s =: - V_{F,1}(s) F_s.
	\end{align*}
	Denote $I := M - N$, then
	\begin{align*}
		\dot{M}_s \leq& -[2A_s - \bar{b}_s - D^2_s \bar{e}_s/\xi_2 ] M_s - Q_s =: - U_{M,1}(s) M_s - Q_s, \\
		\dot{M}_s \geq& -[2A_s + \bar{b}_s + D^2_s \bar{e}_s/\xi_2] M_s - Q_s =: - V_{M,1}(s) M_s - Q_s,\\
		\dot{I}_s \leq& -[2A_s - \bar{b}_s - D^2_s \bar{e}_s/\xi_2 ] I_s - Q_s =: - U_{I,1}(s) I_s - Q_s,\\
		\dot{I}_s \geq& -[2A_s + \bar{b}_s + D^2_s \bar{e}_s/\xi_2] I_s - Q_s =: - V_{I,1}(s) I_s - Q_s.
	\end{align*}
	
	In the same spirit of Lemma \ref{Lem:StateODE}, existence and uniqueness of the solution to the truncated system are guaranteed since the system is locally Lipschitz with linear growth. We can define $\ubar{L}(t)$, $\bar{L}(t)$, $\ubar{M}(t)$, $\bar{M}(t)$, $\ubar{I}(t)$, $\bar{I}(t)$, $\ubar{H}(t)$, $\bar{H}(t)$, $\ubar{F}(t)$, and $\bar{F}(t)$ with $U$ and $V$ that are independent of solutions to ODEs. With \eqref{cond:CtrlODE}, the truncation constants are not binding. The truncated solution is indeed the solution to the original ODE system.	
\end{proof}


\begin{proof}[Proof of Proposition \ref{Cor:CtrlAdm}]
	We only need to verify the moment conditions in Assumption \ref{assum:state}. The ODE system has a continuous solution pair and $M \geq \ubar{m} > 0$. Moreover, with $|\alpha^*_{jt}| \geq \phi$, we conclude that $(h^*, \alpha^*)$ is essentially bounded. Then $u^* \in L^4_\F(0, \, T; \, \R^l)$. Since sufficient conditions \eqref{eq:h-suff} and \eqref{eq:u-suff} hold, $(h^*, u^*)$ is an equilibrium. 
\end{proof}


\begin{proof}[Proof of Corollary \ref{Cor:limit}]
	We only prove the open-loop case since the closed-loop argument is almost the same.
	
	If $\mu_1 = 0$, we verify $h^* = 0$ and $u^* = 0$ is an equilibrium pair. With this conjectured solution, the ODE system \eqref{eq:LHF}--\eqref{eq:MNGamma} reduces to
	\begin{align*}
		&\dot{L}_s = - 2 A_s L_s, \; L_T = \frac{G}{2}, \quad \dot{H}_s = - 2 A_s H_s, \; H_T = \nu, \quad \dot{F}_s = - A_s F_s, \; F_T = 0, \\
		& \dot{M}_s = - 2A_s  M_s, \; M_T = G, \quad  \dot{N}_s = - 2A_s  N_s, \; N_T = \nu, \quad \dot{\Gamma}_s = - A_s  \Gamma_s, \; \Gamma_T = 0.
	\end{align*}
	Then $F = \Gamma = 0$. $M = N$ and $2L = H$ since $G = \nu = 1$. Thus, $\Delta = 0$. By the expression of $h^*$ and $\alpha^*$ in \eqref{eq:h*}--\eqref{eq:alpha*}, we obtain $\alpha^* = 0$ and $h^* = 0$ as desired.
	
	If $\mu_1 \rightarrow \infty$, conjecture $h^* = -B$ and $u^* = 0$ is an equilibrium pair. The ODE system \eqref{eq:LHF} and \eqref{eq:MNGamma} reduces to
	\begin{align*}
		&\dot{L}_s = - 2 A_s L_s +  \xi_1 \frac{|h^*_s|^2}{2}, \; L_T = \frac{G}{2}, \quad \dot{H}_s = - 2 A_s H_s, \; H_T = \nu, \\
		& \dot{F}_s = - A_s F_s, \; F_T = \mu_1 \rightarrow \infty, \\
		& \dot{M}_s = - 2A_s  M_s +  \xi_1 |h^*_s|^2, \; M_T = G, \quad \dot{N}_s = - 2A_s  N_s, \; N_T = \nu, \\
		& \dot{\Gamma}_s = - A_s  \Gamma_s, \; \Gamma_T = \mu_1 \rightarrow \infty.
	\end{align*}
	Therefore, $F \sim \mu_1$, $\Gamma \sim \mu_1$ and converge to infinity. As $L$, $H$, $M$ and $N$ are still bounded, then $\Delta \sim \mu_1$, $E \sim \mu_1$ and converge to infinity. Thus,
	\begin{align*}
		\alpha^*_t &= - [M_t + \Delta_t E_t/\xi_1]^{-1}  B_t \Delta_t  \rightarrow 0, \\
		h^*_t &= E_t \alpha^*_t / \xi_1 = - [\xi_1 M_t + \Delta_t E_t]^{-1}  B_t \Delta_t E_t  \rightarrow - B_t.
	\end{align*}
\end{proof}

\begin{proof}[Proof of Corollary \ref{Cor:Ctrl}]
	It is similar to the proof of Corollary \ref{Cor:limit}. If $\mu_1 = 0$, we conjecture $h^* = 0$, $u^* = 0$, $2L = H$, $M=N$, $F=0$, $\Gamma=0$, $E=0$, and $\Delta=0$. Then the ODE system \eqref{eq:CtrlODE} reduces to
	\begin{eqnarray}\label{eq:Ctrl_simple}
		\left\{\begin{array}{lcl}
				\dot{L}_s &=& -2A_s L_s, \quad L_T = 1/2,\\
				\dot{H}_s &=& -2A_s H_s, \quad H_T = 1, \\
				\dot{F}_s &=& -A_s F_s, \quad F_T = 0, \\
				\dot{M}_s &=& -2A_s M_s, \quad M_T = 1, \\
				\dot{N}_s &=& -2A_s N_s, \quad N_T = 1, \\
				\dot{\Gamma}_s &=& - A_s \Gamma_s, \quad \Gamma_T = 0.
		\end{array}\right.
	\end{eqnarray}
	The solution to \eqref{eq:Ctrl_simple} agrees with our conjecture.
\end{proof}


\end{document}